\newtheorem{theorem}{Theorem}
\newtheorem{lemma}[theorem]{Lemma}
\newtheorem{example}[theorem]{Example}
\newcommand{\Tr}{{\rm Tr}}
\newcommand{\erf}{{\rm erf}}
\newcommand{\Ei}{{\rm Ei}}
\def\text#1{\hbox{\rm#1}}
\newenvironment{proof}{{\it Proof}:}{\hfill$\square$\\ $\phantom{A}$\\}
\begin{document}

\title{Integration in terms of exponential integrals and incomplete
gamma functions I.
}
\author{
Waldemar Hebisch \\
       {Mathematical Institute}\\
       {Wroc\l{}aw University}\\
       {50-384 Wroc\l{}aw}\\
       {\tt hebisch@math.uni.wroc.pl}
}
\date{}
\maketitle
\begin{abstract}
This paper provides a Liouville principle for integration in
terms of exponential integrals and incomplete gamma functions.
\end{abstract}




\section{Introduction}

Indefinite integration means that given $f$ in
some set we want to find $g$ from possibly larger set such that
$f = g'$.  The first step is to delimit possible form
of $g$.  In case when both $f$ and $g$ are elementary
the Liouville-Ostrowski theorem says that only new transcendentals
that can appear in $g$ are logarithms.  More precisely,
when $f\in L$ where $L$ is a differential field with
algebraically closed constant field and $f$ has integral
elementary over $L$, then
$$
f = v_0' + \sum c_i\frac{v_i'}{v_i}
$$
where $v_i \in L$ and $c_i\in L$ are constants.

Liouville-Ostrowski theorem was a starting step
for elementary integration (\cite{Risch:Prob},
\cite{Risch:Sol}, \cite{Daven}, \cite{Tra:alg},
\cite{Bro:ele}).
However, it is interesting
to take larger class of integrands $g$ allowing
special functions in the answer.  Attempts to do this
started quite early.  Starting point for us were
\cite{SSC} which forms basis for early work on
integration in terms of logarithmic integrals and
error functions (\cite{Che:erf}, \cite{Kno:liou1} and \cite{Kno:liou2}).
We extend previous results allowing incomplete gamma function
$$\Gamma(a, x) = \int_x^\infty t^{a-1}\exp(-t)dt.$$
Since
$$\erf(x) = 1 - \pi^{-1/2}\Gamma(1/2, x^2)$$
incomplete gamma function is more general than error function.
Let us add that Liouville principle in \cite{SSC} from one
point of view is very general and handles large class of special
functions, however this class has small intersection with
classical special functions.  So for the purpose of integration
in terms of classical special functions \cite{SSC} has
limited use.

In first part we develop structural theory of exponentials
and logarithms in a differential field which is of
independent interest.  We give conditions
which propagate well trough towers of extensions.  As
result we generalise Risch structure theorem to large
class of differential fields.  Based on this we prove
Liouville type principle for integration in terms
of exponential integral and gamma functions.  This
principle leads to necessary conditions which are
satisfied by parts of integral.  In fact, development
of ideas from this paper lead to a complete integration algorithm
for integration in terms of exponential integral
and incomplete gamma functions.  We announced first version
of the integration algorithm at ISSAC 2015 \cite{He:CCA}
and we will present details of the algorithm in a separate paper.
We hope that other classes
of special functions can be handled in a similar way.
Polylogarithms are of particular interest, however they raise
tricky theoretical questions.

Our main interest is allowing larger class of integrals.  However
structural theory should be also helpful in handling larger
classes of integrands.  We should mention here \cite{Raab}
where elementary integration is extended to larger class
of differential fields (in particular allowing non-Liouvillian
special functions).

After writing first version of this paper we learned about closely
related paper \cite{LeLa} by U. Leerawat and V. Laohakosol.  They
define class of extensions called Ei-Gamma extensions which
generalised extensions from \cite{SSC} by allowing directly
exponential integrals and incomplete gamma functions with rational
first argument.  Our work also allows incomplete gamma functions
with irrational first argument so is more general in this aspect.
If we ignore incomplete gamma functions with irrational first argument,
then Theorem 3.1 in \cite{LeLa} would be more general than our
Theorem 13.  Our paper deals only with exponential
integrals and incomplete gamma functions which means that
we can use trace in straightforward way, avoiding extra
arguments in \cite{SSC} and \cite{LeLa} needed to handle
nonclassical special functions.  As our work is intended as first
step towards integration algorithm we need more structural information
about integrals which we give in Theorems 18, 19 and 21.

\section{Preliminaries}

We assume standard machinery of differential fields
(see for example \cite{Ros}).

If $K$ is a differential field, $v, u \in K$ and
$v' = u'/u$ we say that $u$ is exponential
element and that $v$ is logarithmic
element.  The set of exponential elements in $K$
is an abelian group with respect to
multiplication.  The set of logarithmic elements in $K$
is an abelian group with respect to addition.

We say that a differential field
$k$ is l-closed iff for every algebraic extension
$E$ of $k$ if $c_i\in E$ are constants linearly
independent over rational numbers, $u_i\in E$,
$\sum c_iu_i'/u_i = v'$, then some powers
of $u_i$ are in $k$ modulo multiplicative constants.
Natural constructions of differential
fields lead to l-closed fields, in particular
algebraic extension, extension by logarithm,
extension by exponential and extension by
nonelementary primitive preserve l-closed fields
(for algebraic extension this is immediate consequence of
definition, the other are proved in the sequel).
However, it is easy
to build artificial examples which are not
l-closed, so we state this assumption explicitly.

We say that a differential field $k$ is log-explicit
if equation $\sum c_i {u_i' \over u_i} = v'$ with
$u_i$ and $v$ in an algebraic extension $E$ of $k$ and $c_i \in E$
being constants
linearly independent over rational numbers implies
that $u_i$ are exponential elements in $E$.

Let $K$ be a field.  We say that $f\in K(\theta)$ is a proper
rational function iff degree of numerator of $f$ is smaller
than degree of denominator.

The following lemma is well-known, so we give it without proof.

\begin{lemma}\label{decomp1}  Let $K$ be a field and $f \in K(\theta)$.
We can write $f = w + p$ where $w \in K[\theta, \theta^{-1}]$
and $p$ is a proper rational function with denominator
relatively prime to $\theta$.  Such decomposition
is unique.
\end{lemma}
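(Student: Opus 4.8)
The plan is to establish existence by an explicit construction that separates the pole at $\theta = 0$ from the rest of the denominator, and then to get uniqueness from a short degree-and-divisibility argument.

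\textbf{Existence.} First I would write $f = a/b$ with $a,b\in K[\theta]$, $b\neq0$, and pull out the largest power of $\theta$ dividing $b$: write $b=\theta^m b_0$ with $\theta\nmid b_0$. Since $\gcd(\theta^m,b_0)=1$ in $K[\theta]$, Bézout supplies $u,v\in K[\theta]$ with $u\theta^m+vb_0=1$, whence
$$ f=\frac{a}{\theta^m b_0}=\frac{av}{\theta^m}+\frac{au}{b_0}. $$
Then I would reduce each summand separately. Dividing $av$ by the monic polynomial $\theta^m$ (i.e. splitting off the low-order terms) gives $av=q_1\theta^m+r_1$ with $\deg r_1<m$, so $av/\theta^m=q_1+r_1/\theta^m$, and $r_1/\theta^m\in K[\theta,\theta^{-1}]$ because $r_1$ contributes only the powers $\theta^{-m},\dots,\theta^{-1}$. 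Dividing $au$ by $b_0$ gives $au=q_2b_0+r_2$ with $\deg r_2<\deg b_0$, so $au/b_0=q_2+r_2/b_0$ with $q_2\in K[\theta]$ and $r_2/b_0$ proper with denominator $b_0$ coprime to $\theta$. Setting $w:=q_1+q_2+r_1/\theta^m\in K[\theta,\theta^{-1}]$ and $p:=r_2/b_0$ finishes the existence half.

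\textbf{Uniqueness.} It suffices to show that the only decomposition of $0$ is the trivial one: if $w+p=0$ with $w\in K[\theta,\theta^{-1}]$ and $p$ proper with denominator prime to $\theta$, then $w=p=0$. Applying this to the difference of two decompositions is legitimate because a difference of proper rational functions whose denominators are prime to $\theta$ is again proper (compare the degrees of numerator and denominator of the combined fraction) with denominator prime to $\theta$ (the common denominator is the product of the two). So suppose $w+p=0$. Write $w=\theta^{-k}g$ with $k\ge0$ and $g\in K[\theta]$ chosen so that $g=0$ or $\theta\nmid g$, and write $p=-w=c/d$ in lowest terms with $\gcd(d,\theta)=1$ and $\deg c<\deg d$. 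From $gd=\theta^k c$ and $\gcd(d,\theta)=1$ we get $\theta^k\mid g$, which forces $k=0$ unless $g=0$; and if $k=0$ then $g=c/d$ is a polynomial equal to a proper rational function, so $g=0$ by degree comparison. Either way $w=0$, and then $p=0$.

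I do not expect a genuine obstacle here: the work is bookkeeping — checking in the existence step that each piece lands in the intended space ($K[\theta,\theta^{-1}]$ versus proper with denominator prime to $\theta$), and in the uniqueness step the observation that the class of proper rational functions with denominator prime to $\theta$ is closed under subtraction, which is precisely where both the degree condition and the coprimality condition are used.
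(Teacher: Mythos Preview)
Your argument is correct. The paper itself states this lemma without proof, calling it well-known, so there is nothing to compare against; your B\'ezout-plus-division construction for existence and the degree/divisibility argument for uniqueness are exactly the standard verification one would supply if asked to fill in the details.
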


The method of proof used in the next lemma is well-known, but we give
proof for readers convenience.

\begin{lemma}\label{decomp2}  Let $K$ be a differential field, $\theta$
an exponential over $K$, that is $\theta' = \eta'\theta$
with $\eta \in K$ and $\theta$ is transcendental over $K$.
Assume that
$$
f = \sum c_i\frac{v_i'}{v_i}
$$
with $v_i \in K(\theta)$ and constant $c_i$.  Then
there exist constant $a$ and ${\bar v}_i \in K$ such that
$$
f = a\eta' + \sum c_i\frac{{\bar v}'}{\bar v} + p
$$
where $p$ is a proper rational function with denominator
relatively prime to $\theta$.
\end{lemma}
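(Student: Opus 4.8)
The plan is to factor each $v_i$ in $K[\theta]$, take logarithmic derivatives so that products turn into sums, and then split off from each irreducible factor a constant multiple of $\eta'$ together with a proper rational function whose denominator is relatively prime to $\theta$.

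First I would write, using that $K[\theta]$ is a UFD (as $\theta$ is transcendental over $K$),
$$v_i = a_i\,\theta^{n_i}\prod_j q_{ij}^{m_{ij}},$$
where $a_i\in K$ is nonzero, $n_i,m_{ij}\in\mathbb Z$, and the $q_{ij}$ are monic irreducible elements of $K[\theta]$ different from $\theta$. Taking logarithmic derivatives and using $\theta'/\theta=\eta'$ gives
$$\frac{v_i'}{v_i}=\frac{a_i'}{a_i}+n_i\eta'+\sum_j m_{ij}\frac{q_{ij}'}{q_{ij}}.$$
The key observation concerns a monic $q\in K[\theta]$ of degree $d\ge1$: because $\theta'=\eta'\theta$, differentiating $q$ term by term shows that $q'$ has degree $\le d$ and that the coefficient of $\theta^d$ in $q'$ equals $d\eta'$; hence $q'-d\eta' q$ has degree $<d$, so
$$\frac{q'}{q}=d\eta'+\frac{q'-d\eta' q}{q}$$
is a constant multiple of $\eta'$ plus a proper rational function. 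If in addition $q$ is irreducible and $q\ne\theta$, then this proper rational function has denominator $q$ relatively prime to $\theta$. (When $\eta'=0$ the identity still holds, with the $d\eta'$ term vanishing and $q'$ already of degree $<d$.)

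Substituting these identities into the expression for $f$ and collecting terms, I obtain
$$f=\sum_i c_i\frac{a_i'}{a_i}+\Bigl(\sum_i c_i\bigl(n_i+\sum_j m_{ij}\deg q_{ij}\bigr)\Bigr)\eta'+\sum_{i,j}c_i m_{ij}\frac{q_{ij}'-(\deg q_{ij})\eta' q_{ij}}{q_{ij}}.$$
Putting $\bar v_i=a_i\in K$ and $a=\sum_i c_i\bigl(n_i+\sum_j m_{ij}\deg q_{ij}\bigr)$, which is a constant because it is a $\mathbb Z$-linear combination of the constants $c_i$, the first two groups of terms take the required form $a\eta'+\sum_i c_i\,\bar v_i'/\bar v_i$. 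Finally, the last sum is a proper rational function with denominator relatively prime to $\theta$: each summand is of this kind by the key observation, and a finite sum of proper rational functions whose denominators are all relatively prime to $\theta$ is again of this kind, as one sees by putting them over the common denominator given by the product of the distinct $q_{ij}$.

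I do not expect a real obstacle here; the points needing care are the degree bookkeeping for $q'$ and the value of its leading coefficient, the fact that $a$ comes out a genuine constant rather than merely an element of $K$, and the closedness of the class of ``proper rational function with denominator relatively prime to $\theta$'' under finite sums.
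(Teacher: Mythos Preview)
Your proof is correct and follows essentially the same approach as the paper. The paper's version is slightly more economical: rather than factoring each $v_i$ into irreducibles, it simply writes $v_i=\bar v_i s_i$ with $s_i$ monic and observes directly that $s_i'/s_i = (\deg s_i)\,\eta' + \text{(proper rational with denominator prime to }\theta\text{)}$, which is exactly your ``key observation'' applied once to the whole monic part instead of to each irreducible factor; the resulting constant $a=\sum_i c_i\deg s_i$ matches your $\sum_i c_i\bigl(n_i+\sum_j m_{ij}\deg q_{ij}\bigr)$.
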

\begin{proof}  Without loss of generality we may assume that
all $v_j$ are polynomials (just consider separately contribution
from numerator and denominator).  Write $v_i = {\bar v}_is_i$
where $s_i$ is monic.  We have
$$
\frac{s_i'}{s_i} = l_i\frac{\theta'}{\theta} + \text{proper rational function}
$$
where $l_i$ is degree of $s_i$ and proper rational part has
denominator relatively prime to $\theta$.  Next
$$
\frac{v_i'}{v_i} = \frac{{\bar v}'}{\bar v} + \frac{s_i'}{s_i}
= \frac{{\bar v}'}{\bar v} + l_i\frac{\theta'}{\theta}
 + \text{proper rational function}
$$
so
$$
\sum c_i\frac{v_i'}{v_i} = \sum c_i\frac{{\bar v}'}{\bar v} +
\sum c_il_i\eta' + \text{proper rational function}
$$
and we get the result with $a = \sum c_il_i$.
\end{proof}

In similar way as Lemma \ref{decomp2} we can prove the following:

\begin{lemma}\label{decomp3}  Let $K$ be a differential field, $\theta$
primitive over $K$, that is $\theta' \in K$ and $\theta$ is
transcendental over $K$.
Assume that
$$
f = \sum c_i\frac{v_i'}{v_i}
$$
with $v_i \in K(\theta)$ and constant $c_i$.  Then
there exist ${\bar v}_i \in K$ such that
$$
f = \sum c_i\frac{{\bar v}'}{\bar v} + p
$$
where $p$ is a proper rational function.
\end{lemma}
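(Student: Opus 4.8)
The plan is to mimic the proof of Lemma~\ref{decomp2} step by step, the one essential change being that when $\theta$ is primitive rather than exponential the derivative of a monic polynomial in $\theta$ has strictly smaller degree, so the $\eta'$-term disappears and no coprimality-with-$\theta$ restriction on the remainder survives. First I would reduce to the case in which every $v_i$ is a polynomial in $\theta$: writing $v_i = P_i/Q_i$ with $P_i,Q_i\in K[\theta]$ gives $v_i'/v_i = P_i'/P_i - Q_i'/Q_i$, so $f$ is again a finite sum of logarithmic derivatives of polynomials in $\theta$, now with coefficients $\pm c_i$; at the end the contributions coming from $P_i$ and $Q_i$ recombine, since $c_i(\bar P_i'/\bar P_i - \bar Q_i'/\bar Q_i) = c_i (\bar P_i/\bar Q_i)'/(\bar P_i/\bar Q_i)$, into a single element $\bar v_i = \bar P_i/\bar Q_i\in K$ carrying the original coefficient $c_i$.

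For a polynomial $v_i$ of degree $l_i$, let $\bar v_i\in K$ be its leading coefficient and $s_i$ the corresponding monic polynomial, so that $v_i=\bar v_i s_i$ and $v_i'/v_i = \bar v_i'/\bar v_i + s_i'/s_i$. The point at which primitivity enters is the estimate of $\deg s_i'$: writing $s_i=\theta^{l_i}+a_{l_i-1}\theta^{l_i-1}+\dots$ and using $\theta'\in K$, the top term contributes $l_i\theta^{l_i-1}\theta'$, of degree $l_i-1$, while each lower term $a_j\theta^j$ contributes $a_j'\theta^j+j a_j\theta^{j-1}\theta'$, of degree at most $j\le l_i-1$; hence $\deg s_i' < \deg s_i$ and $s_i'/s_i$ is a proper rational function. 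Summing over $i$ yields $f = \sum c_i\bar v_i'/\bar v_i + p$ with $p=\sum c_i s_i'/s_i$, a sum of proper rational functions and therefore itself proper, which is the assertion. (When $l_i=0$, i.e.\ $v_i\in K$, one simply keeps $\bar v_i=v_i$, contributing nothing to $p$.)

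I expect no real obstacle here: the entire content is the degree bound $\deg s_i' \le l_i-1$, which is precisely what replaces the $\eta'$ bookkeeping of Lemma~\ref{decomp2}, and which is false for exponential $\theta$ (there $(\theta^{l_i})'=l_i\eta'\theta^{l_i}$ keeps the degree, producing the $l_i\eta'$ summand); the reduction to polynomials and the additivity of the logarithmic derivative are purely formal.
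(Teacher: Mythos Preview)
Your proof is correct and follows exactly the approach the paper intends: the paper gives no separate proof of Lemma~\ref{decomp3} but simply says it is proved ``in similar way as Lemma~\ref{decomp2}'', and your argument is precisely that adaptation, with the key observation that for primitive $\theta$ the derivative of a monic polynomial drops in degree so that $s_i'/s_i$ is already proper and the $a\eta'$ term vanishes. Your treatment of the reduction to polynomials and the recombination of numerator and denominator leading coefficients into a single $\bar v_i\in K$ is also handled correctly.
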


\section{Structure of fields}

First we show that property of being l-closed
or log-explicit
is preserved by common extensions.  Since the
proofs are quite similar we will simultaneously
state two versions, omitting text about log-explicit in parentheses
one will get statements about l-closed fields.
Replacing l-closed by log-explicit one will get second
statement.

\begin{lemma}\label{l-closed1}
Let $k$ be l-closed (or log-explicit) differential field of
characteristic $0$, $L$ is an extension of $k$
with the same constants.
Assume that
$$\sum_{i=1}^n c_i {u_i'/u_i} = v'$$
with $u_i, v$ algebraic over $L$, and $c_i$ being constants
algebraic over $k$ and
linearly independent over rational numbers implies
that $u_i$ and $v$ are algebraic over $k$.  Then
$L$ is l-closed (respectively log-explicit).
In particular the condition above is satisfied when 
$t$ is a nonelementary primitive over $k$, and $L = k(t)$.
\end{lemma}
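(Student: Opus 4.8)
The plan is to prove the two parts of the statement separately. The first part — that the displayed condition makes $L$ l-closed (resp.\ log-explicit) — reduces directly to the corresponding property of $k$. Given an algebraic extension $E$ of $L$ and an identity $\sum c_i u_i'/u_i = v'$ with $u_i\in E$ and the $c_i\in E$ constants linearly independent over $\mathbb{Q}$, first note that a constant algebraic over a differential field is algebraic over its constant subfield (differentiate a minimal polynomial); since $L$ and $k$ have the same constants, the $c_i$ are algebraic over $k$. The hypothesis of the lemma then yields that all $u_i$ and $v$ are algebraic over $k$, so they and the $c_i$ lie in a single algebraic extension $E'$ of $k$. Applying l-closedness of $k$ to $E'$ gives that some powers of the $u_i$ lie in $k$ — and hence in $L$ — up to multiplicative constants, which is exactly the l-closed property for $L$; applying log-explicitness of $k$ to $E'$ instead gives that each $u_i$ is an exponential element of $E'\subseteq E$. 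This part is routine.

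For the second part one verifies the condition for $L=k(t)$ with $t$ a nonelementary primitive over $k$. I would first reduce to the case $c_i\in k$: replacing $k$ by $k(\{c_i\})$ is an algebraic extension over which $t$ remains a nonelementary primitive (an elementary antiderivative of $t'$ over the larger base would, via the trace, give one over $k$) and which is still l-closed, and which does not change the meaning of ``algebraic over $k$''. So assume $\sum c_i u_i'/u_i=v'$ with $c_i\in k$ and $u_i,v$ algebraic over $k(t)$; set $M=k(t,u_1,\dots,u_n,v)$ and let $\kappa$ be the relative algebraic closure of $k$ in $M$. Then $\kappa$ is a finite differential extension of $k$ inside $\overline{k}$, $M/\kappa$ is an algebraic function field in one variable with $t\notin\kappa$, and — the crucial point — $\eta:=t'\in k\subseteq\kappa$, so $\eta$ is a constant of $M/\kappa$ in the sense that $v_P(\eta)=0$ at every place $P$. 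We must show all $u_i,v$ lie in $\kappa$.

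The core is a place-by-place analysis of the identity over $M/\kappa$, in the spirit of the classical proof of Liouville's theorem. Since $\eta\in\kappa$, the derivation on $\kappa(t)$ sends the standard uniformizer at any finite place to a unit — it cannot vanish there, for that would make $t'$ the derivative of an element algebraic over $k$, contradicting nonelementarity — and this forces, at each place $P$ of $M/\kappa$ over a finite place of $\kappa(t)/\kappa$, that the principal part of $u_i'/u_i$ at $P$ equals $v_P(u_i)$ times a fixed nonzero element independent of $i$. Comparing orders of pole on the two sides of $\sum c_i u_i'/u_i=v'$ at $P$ and using that the $c_i$ are linearly independent over $\mathbb{Q}$ then forces $v_P(u_i)=0$ for all $i$ and $v_P(v)\ge 0$; hence each $u_i$ is a unit of the ring of functions regular away from the places over $t=\infty$, and $v$ has poles only there, if at all. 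To deal with this remaining ``polynomial in $t$'' part is where Lemma~\ref{decomp3} and nonelementarity enter: pass to a normal closure $N$ of $M$ over $\kappa(t)$ with constant field $\kappa_N\subseteq\overline{k}$, and sum the identity over $\mathrm{Gal}(N/\kappa_N(t))$; this replaces the $u_i$ by $\Norm(u_i)$ and $v$ by $\Tr(v)$, all in $\kappa_N(t)$, and on the rational function field $\kappa_N(t)/\kappa_N$ the analysis above applies with no ramification, so Lemma~\ref{decomp3} rewrites the left-hand side as $\sum c_i\bar v_i'/\bar v_i+(\text{proper rational function})$ with $\bar v_i\in\kappa_N$. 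Matching polynomial parts then shows the polynomial part of $\Tr(v)$ has derivative of degree $\le 0$, and a positive-degree polynomial part would — through its leading two coefficients — exhibit $t'$ as an elementary antiderivative over $\kappa_N\subseteq\overline{k}$, contradicting that $t$ is nonelementary.

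The step I expect to be the real obstacle is the last one: passing from the information obtained about $\Norm(u_i)$ and $\Tr(v)$ back to the individual $u_i$ and $v$ — equivalently, carrying out the ``part at infinity'' argument directly on $M/\kappa$ rather than on a rational model. When $M=\kappa(t)$ there is a single place over $\infty$ and everything closes at once; in general one must combine the divisor information already obtained with the residue theorem on $M/\kappa$ and, once again, linear independence of the $c_i$, to conclude that the divisors of the $u_i$ are trivial and that $v$ is regular everywhere, so that $u_i,v\in\kappa$. All the analysis away from $t=\infty$, by contrast, is forced cleanly by the order-of-pole comparison.
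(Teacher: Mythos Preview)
Your treatment of the first assertion is fine and matches the paper's one-word dismissal (``clear'').

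For the second assertion, however, you are working far harder than necessary and end with a self-acknowledged gap. The paper's argument is a two-line application of Rosenlicht's structure theorem (\cite{Ros}, Theorem~2; compare also the specialised form stated here as Lemma~\ref{ros-struct}): once one enlarges $k$ so that the constants $c_i$ lie in it, that theorem applied to the primitive extension $k(t)$ says immediately that each $u_i$ is algebraic over $k$ and that $v-ct$ is algebraic over $k$ for some constant $c$. If $c=0$ we are done; if $c\ne 0$ then $t'=(1/c)\bigl(\sum c_i u_i'/u_i - f'\bigr)$ with $f=v-ct$ algebraic over $k$, exhibiting $t$ as elementary over $k$ and contradicting the hypothesis. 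That is the whole proof.

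What you are doing instead --- a place-by-place analysis on the function field $M/\kappa$, trace/norm descent to the rational model, and then an attempt to lift back --- is essentially a from-scratch reproof of the relevant case of Rosenlicht's theorem. That is not wrong in spirit, but it is a substantial detour, and the ``real obstacle'' you flag at the end (recovering the individual $u_i$ and $v$ from information about their norms and traces, i.e.\ handling the places over $t=\infty$ on the non-rational curve) is exactly the nontrivial content of Rosenlicht's result. You have not supplied that step, and the residue-theorem sketch you offer does not obviously close it: trivial divisor on $M/\kappa$ gives $u_i\in\kappa$, yes, but you have only argued $v_P(u_i)=0$ at places over finite points and have not carried the $\mathbb{Q}$-linear-independence argument through at the (possibly several, possibly ramified) places over $\infty$. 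So as written the proposal has a genuine gap; the fix is simply to invoke \cite{Ros} Theorem~2 rather than redo it.
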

\begin{proof}
The first part is clear.  To prove claim about $k(t)$
let $E$ be algebraic over $k(t)$.  Since $t$ is nonelementary
any new constant in $E$ is algebraic over $k$.  Without loss
of generality we may assume that all constants in $E$ are
in $k$ (we enlarge $k$ if needed).
  Assume
$\sum_{i=1}^n c_i {u_i'/u_i} = v'$ with $u_i, v \in E$, 
and $c_i$ being constants
linearly independent over rational numbers.   By \cite{Ros} Theorem 2
the $u_i$ are algebraic over $k$ and there is a constant $c$
such that $f = v -ct$ is algebraic over $k$.  If $c = 0$, then
our assumption about $L=k(t)$ holds.
If $c \ne 0$, then
$t = (1/c)(v - f)$ and  $t' = (1/c)(-f' + \sum c_i {u_i' \over u_i})$,
so $t$ is elementary, which contradicts assumption that $t$
is nonelementary.
\end{proof}

\begin{lemma}\label{l-closed2}
If $k$ is l-closed (or log-explicit) differential field of
characteristic $0$, $K$ is algebraic over $k(s, t)$,
$t'/t = s'$, transcendental degree of $K$ over $k$ is $1$,
$K$ has the same constants as $k$,
then $K$ is l-closed (respectively log-explicit).  
\end{lemma}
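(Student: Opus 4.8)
\medskip
\noindent\emph{Proof plan.} Since an algebraic extension of an l-closed (respectively log-explicit) field is again l-closed (respectively log-explicit) and $K$ is algebraic over $k(s,t)$, it suffices to treat $k(s,t)$ itself. The fields $k(s,t)$ and $K$ have the same constants as $k$, so in any relation $\sum c_i u_i'/u_i=v'$ with $u_i,v$ algebraic over $k(s,t)$ and the $c_i$ constants linearly independent over $\mathbb{Q}$, each $c_i$ is algebraic over the constant field of $k$, hence over $k$; enlarging $k$ by a finite algebraic extension we may therefore assume the $c_i$ lie in $k$, and such an enlargement preserves both the constant field and l-closedness (respectively log-explicitness). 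Since $k(s,t)$ has transcendence degree $1$ over $k$ and $t'/t=s'$, at least one of $s,t$ is transcendental over $k$, and I would split into the cases ``$s$ algebraic over $k$'', ``$t$ algebraic over $k$'', and ``$s$ and $t$ both transcendental over $k$''.

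In the first case, enlarging $k$ so that $s\in k$, we get $t'=s't$ with $s\in k$, so either $t$ is algebraic over $k$ (and then $k(s,t)$ is algebraic over $k$, so we are done by l-closedness of $k$) or $t$ is an exponential monomial over $k$. In the latter situation I would apply the structure theorem for exponential monomials (the exponential analogue of Theorem~2 of \cite{Ros}, in the spirit of Lemma~\ref{decomp2}) to the relation over an algebraic extension of $k(t)$: it gives $v$ algebraic over $k$ and $u_i=t^{m_i}w_i$ with $m_i\in\mathbb{Z}$ and $w_i$ algebraic over $k$. Then $u_i'/u_i=m_is'+w_i'/w_i$, so $\sum c_iw_i'/w_i=\bigl(v-(\sum_i c_im_i)\,s\bigr)'$ is a relation over an algebraic extension of $k$; l-closedness of $k$ yields $w_i^{n_i}\in k$ up to constants, hence $u_i^{n_i}=t^{n_im_i}w_i^{n_i}\in k(t)$ up to constants (for log-explicitness one instead concludes each $w_i$, hence each $u_i$, is exponential). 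The second case is the mirror image: enlarging $k$ so that $t\in k$ makes $s'=t'/t\in k$, so $s$ is a primitive (in fact logarithmic) monomial over $k$ if it is transcendental, and Theorem~2 of \cite{Ros} (in the spirit of Lemma~\ref{decomp3}) gives $u_i$ algebraic over $k$ and a constant $c$ with $v-cs$ algebraic over $k$; writing $cs'=c\,t'/t$ and absorbing it into the logarithmic-derivative sum --- splitting off according to whether $c$ lies in the $\mathbb{Q}$-span of the $c_i$ and adjusting the $u_i$ by integer powers of $t\in k$ --- again reduces to a relation over an algebraic extension of $k$.

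The remaining case, $s$ and $t$ both transcendental over $k$, is the crux. As the transcendence degree is $1$, $t$ is algebraic over $k(s)$ and $s$ over $k(t)$, and a conjugacy argument (the relevant $k(t)$- and $k(s)$-conjugates have the same derivative $t'/t$, respectively are constant multiples of $t$, because $t'/t$ lies in the base and the derivation extends uniquely to finite extensions) shows, after absorbing a constant into $k$, that $s\in k(t)$ and $t^d\in k(s)$ for some $d$; hence $k(s,t)$ is a rational function field $k(\theta)$ in one transcendental over $k$ on which $t'/t=s'$ is an exact derivative of a transcendental element. The difficulty is that $\theta$ need not be a monomial over $k$ at all --- e.g.\ over $k=\mathbb{Q}(x)$ the solution $t$ of $t'=t^2/(t+1)$ has $t'/t=s'$ with $s=x+1/t\in k(t)$ transcendental, yet $k(t)$ has no exponential or primitive generator over $k$ --- so neither structure theorem applies and one must argue by hand. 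The plan here is to reduce whatever is possible by passing to a proper differential subfield over which $t$ becomes exponential (available when $s'$ has degree at least $2$ as a rational function of $\theta$), and in the residual cases to analyse the relation $\sum c_iu_i'/u_i=v'$ over an algebraic extension of $K=k(\theta)$ one place of $K/k$ at a time, using $\mathbb{Q}$-linear independence of the $c_i$ to force the orders of vanishing of the $u_i$ to match those of $v$, so that each $u_i$ has trivial divisor over $k(\theta)$ and hence $u_i^{n_i}\in k(\theta)$ up to constants. I expect the delicate point to be carrying out this order/residue bookkeeping with a derivation that is not $k$-linear, and the same difficulty to reappear for the log-explicit variant.
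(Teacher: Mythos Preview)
Your case split into ``$s$ algebraic'', ``$t$ algebraic'', and ``both transcendental'' is unnecessary, and the third case is a genuine gap. You acknowledge this yourself: in the situation where $k(s,t)=k(\theta)$ with $\theta$ neither a primitive nor an exponential monomial over $k$ (your $t'=t^2/(t+1)$ example is apt), neither structure theorem applies, and your proposed place-by-place residue analysis is only a sketch. The claim that one can always ``pass to a proper differential subfield over which $t$ becomes exponential'' when $s'$ has high degree is not justified, and the ``residual cases'' are not pinned down. As written, the third case is not proved.

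The paper avoids all of this by going straight to Rosenlicht's differential-forms machinery, which is exactly designed for relations of this type without assuming the transcendental is a monomial. Given $\sum c_i u_i'/u_i = v'$ in an algebraic extension $E$ of $K$, one considers the K\"ahler differentials $\omega_1 = dt/t - ds$ and $\omega_2 = dv - \sum c_i\, du_i/u_i$ in $\Omega_{E/C}$. Both are annihilated by the derivation, and since $\operatorname{tr.deg}(E/C)$ exceeds $\operatorname{tr.deg}(E/k)$ by at most one (both sit over the same constants), Rosenlicht's Theorem~1 forces $\omega_1$ and $\omega_2$ to be linearly dependent over constants. Because one of $s,t$ is transcendental, $\omega_1\neq 0$, so $\omega_2 = c\,\omega_1$ for some constant $c$, i.e.
\[
d(v-cs) + c\,\frac{dt}{t} - \sum_i c_i\,\frac{du_i}{u_i} = 0.
\]
Now Rosenlicht's Proposition~4 (a vanishing form of this shape forces the ingredients to be algebraic over the constant-coefficient subfield) is applied, splitting only on whether $c$ lies in the $\mathbb{Q}$-span of the $c_i$. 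If not, one gets $v-cs$, $t$, and all $u_i$ algebraic over $k$ directly, and the hypothesis on $k$ finishes. If $mc=\sum r_i c_i$ with integers $m,r_i$, one rewrites the relation in terms of $w_i = u_i^m/t^{r_i}$, applies Proposition~4 to get the $w_i$ algebraic over $k$, and then recovers the conclusion for $u_i$ using $u_i^m = w_i t^{r_i}$. This argument is uniform in $s$ and $t$: it never needs either to be a monomial over $k$, which is precisely the obstruction that derails your third case.
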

\begin{proof}
Let $E$ be algebraic over $K$.
Any new constant in $E$ is algebraic over $k$.  Without loss
of generality we may assume that all constants in $E$ are
in $k$ (we enlarge $k$ if needed).
  Assume
$$\sum_{i=1}^n c_i {u_i'/u_i} = v'$$ with $u_i, v \in E$ and 
$c_i$ being constants 
linearly independent over rational numbers.  By \cite{Ros} Theorem 1
forms $\omega_1 = {dt \over t} -ds$ and
$dv - \sum c_i{du_i \over u_i}$ are linearly dependent over
constants.  Since at least one of $t$ and $s$ is transcendental over
$k$, we have $\omega_1 \ne 0$, so there is a constant $c$ such
that $$d(v - cs) + c{dt \over t} - \sum c_i{du_i \over u_i} = 0.$$
If $c$ and $c_i$ are linearly
independent over rationals,
then by \cite{Ros} Proposition 4 this implies that $v - cs$,
$t$, and $u_i$ are algebraic over $k$.  So
$$-ct'/t + \sum_{i=1}^n c_i {u_i'/u_i} = (v - cs)'$$
is equality with terms algebraic over $k$ and we can
use our assumption about $k$.
If $k$ is l-closed some power of
$u_i$ is in $k$ modulo multiplicative constants, which
is what we require.  If $k$ is log-explicit than $u_i$
are exponential elements in $E$, which again is what we need.
In case when $c$ are $c_i$ are linearly
dependent over rationals, $c$ is a linear combination
over rationals of $c_i$ so we can write
$mc = \sum r_i c_i$ with $m$, $r_i$ being integers.  Then
$$md\/(v - cs) - \sum_{i=1}^{n} c_i{dw_i \over w_i} = 0,$$
where
$w_i = u_i^m/t^{r_i}$.  By \cite{Ros} Proposition 4 this
means that $w_i$ and $v - cs$ are algebraic over $k$.  Again, if
$k$ is l-closed some powers of $w_i$ are in $k$ modulo
multiplicative constants.  But $u_i^m = w_it^{r_i}$, so some
powers of $u_i$ are in $k(t)$ modulo multiplicative constants.
If $k$ is log-explicit, then $w_i$ are exponential elements in $E$.
But $t$ is also exponential element in $E$ so $u_i^m$ are
exponential elements in $E$.  Hence $u_i$ are
exponential elements in $E$.
\end{proof}

In particular Lemma \ref{l-closed2} handles extension by
an exponential, by a logarithm or by Lambert $W$ function.

\begin{lemma}\label{log-closed}
If $k$ is l-closed differential field of
characteristic $0$, $K$ is an
algebraic extension of $k$ and $v$ is a logarithmic
element in $K$, then there is a constant $c$ such
that $v - c \in k$.
\end{lemma}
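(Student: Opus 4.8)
The plan is to first use l-closedness to force $v'$ into the base field $k$, and then recover $v$ itself, up to an additive constant, from the minimal polynomial of $v$ over $k$.

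Since $v$ is a logarithmic element of $K$, fix $u \in K$ with $v' = u'/u$; note $u \ne 0$. Apply the definition of l-closed to the algebraic extension $K$ of $k$, taking the single constant $c_1 = 1$ (a one-element family that is linearly independent over the rationals) and $u_1 = u$. The relation $c_1 u_1'/u_1 = v'$ then shows that for some integer $n \ge 1$ we have $u^n = \gamma w$ with $\gamma$ a nonzero constant and $w \in k \setminus \{0\}$. Differentiating, $(u^n)' = \gamma w'$, hence
$$n v' = n\frac{u'}{u} = \frac{(u^n)'}{u^n} = \frac{\gamma w'}{\gamma w} = \frac{w'}{w},$$
and since $k$ has characteristic $0$ this gives $v' = \frac{1}{n}\,w'/w \in k$.

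Now let $p(X) = X^m + a_{m-1}X^{m-1} + \cdots + a_0 \in k[X]$ be the minimal polynomial of $v$ over $k$, which exists as $v$ is algebraic over $k$. Differentiating $p(v) = 0$ yields
$$\sum_{j=0}^{m}\bigl(a_j' v^j + j\,a_j v^{j-1} v'\bigr) = 0, \qquad a_m = 1.$$
As $v' \in k$, the left side is a polynomial in $v$ of degree at most $m - 1$ with coefficients in $k$; by minimality of $p$ it is identically zero, so comparing coefficients of $v^{m-1}$ gives $a_{m-1}' + m v' = 0$. Therefore $(v + a_{m-1}/m)' = v' + a_{m-1}'/m = 0$, so $c := v + a_{m-1}/m$ is a constant and $v - c = -a_{m-1}/m \in k$, which is what we want.

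The substantive step is the first one: one must notice that even the one-term relation $v' = u'/u$ already triggers l-closedness, and that it yields $v'$ genuinely in $k$ rather than merely algebraic over $k$. Once $v' \in k$ is known, the minimal-polynomial computation is just the familiar argument that an element algebraic over $k$ whose derivative lies in $k$ differs from an element of $k$ by a constant, so no further difficulty arises.
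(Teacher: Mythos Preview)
Your proof is correct and follows essentially the same approach as the paper's: both first invoke l-closedness on the single relation $v'=u'/u$ to force $v'\in k$, and then deduce that $v$ differs from an element of $k$ by a constant. The only variation is in that second step: the paper takes the trace $\Tr(v)$ from $K$ to $k$ and uses $\Tr(v)'=mv'$, whereas you differentiate the minimal polynomial of $v$ and read off the coefficient of $v^{m-1}$; since $\Tr_{k(v)/k}(v)=-a_{m-1}$ these are really the same computation, so the difference is cosmetic rather than substantive.
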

\begin{proof}
By definition, if $v$ is logarithmic element in $K$, then there
exists $u \in K$ such that $u'/u = v'$.  Since $k$
is l-closed there exist integer $n$ such that
$u^n = c_0w$ for some $w \in k$ and a constant $c_0$.  Then
$$
u'/u = (1/n)w'/w \in k
$$
so $v' \in k$.  Taking trace from $K$ to $k$ we get
$$
mv' = \Tr(v)'
$$
when $m$ is degree of $K$ over $k$.  Consequently putting
$f = \Tr(v)/m$ we have $f\in k$ and $f' = v'$ so
$v - f = c$ is a constant.
\end{proof}

\begin{lemma}\label{exp-const}
If $K$ is extension of a differential field $k$ by algebraic constants,
$u \in K - \{0\}$ is such that $u'/u \in k$, then
there is a constant $c\in K$ such that $u/c \in k$
\end{lemma}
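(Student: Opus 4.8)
The plan is to exploit the fact that $K$ has a $k$-basis consisting of constants; this decouples the equation $u' = (u'/u)\,u$ into independent linear equations for the coordinates of $u$, and one of those coordinates will be the desired element of $k$.

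First I would reduce to the finite case. Since $u$ is a rational expression in only finitely many of the generating constants, I may replace $K$ by $k(\alpha_1,\dots,\alpha_n)$ where each $\alpha_i$ is a constant algebraic over $k$; then $K/k$ is finite. The monomials $\alpha_1^{e_1}\cdots\alpha_n^{e_n}$ span $k(\alpha_1,\dots,\alpha_n)=k[\alpha_1,\dots,\alpha_n]$ over $k$ and are all constants, so I can choose among them a $k$-basis $m_1,\dots,m_d$ of $K$ consisting of constants.

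Next I would write $u=\sum_{j=1}^d b_j m_j$ with $b_j\in k$. Because $m_j'=0$ we get $u'=\sum_j b_j' m_j$, while $(u'/u)u=\sum_j (u'/u)b_j\, m_j$; since $u'/u\in k$ and the $m_j$ are $k$-linearly independent, comparing coefficients gives $b_j'=(u'/u)\,b_j$ for every $j$. As $u\neq 0$, some $b_{j_0}$ is nonzero, and then $c:=u/b_{j_0}$ satisfies $c'/c=u'/u-b_{j_0}'/b_{j_0}=0$, so $c$ is a constant of $K$ and $u/c=b_{j_0}\in k$, as required.

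I do not expect a serious obstacle; the one thing to see is that one should look for a $k$-basis of constants (equivalently, that every $k$-conjugate $\sigma(u)$ of $u$ is a constant multiple of $u$, since $\sigma$ fixes $u'/u\in k$ and hence $(\sigma(u)/u)'/(\sigma(u)/u)=0$). The more naive route via the norm is not enough: setting $N=\Norm_{K/k}(u)\in k$ one has $N'/N=[K:k]\,(u'/u)$, which only shows $u^{[K:k]}$ equals a constant times an element of $k$, not $u$ itself — recovering $u$ genuinely requires the hypothesis that $K$ is generated by constants.
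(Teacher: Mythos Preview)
Your proof is correct and follows essentially the same route as the paper: both arguments pick a $k$-basis of $K$ consisting of constants, expand $u$ in that basis, and use $u'/u\in k$ together with $k$-linear independence to get $b_j'=(u'/u)b_j$ for every coordinate, whence any nonzero coordinate gives the desired element of $k$. The only cosmetic difference is that the paper reduces by induction to a simple extension $K=k(\alpha)$ and uses the basis $1,\alpha,\dots,\alpha^{n-1}$, whereas you handle all the generators at once via monomials in $\alpha_1,\dots,\alpha_n$.
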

\begin{proof}
Without loss of generality we may assume that $K$
is finitely generated over $k$.  By induction
we may assume that $K$ is generated over $k$
by a single algebraic constant $\alpha$, so
that $K = k(\alpha)$.  Then powers of $\alpha$
form a basis of $K$ over $k$ and we can write
$$
u = \sum_{i=0}^{n-1}\alpha^i w_i
$$
where $n$ is degree of $\alpha$ over $k$ and
$w_i \in k$.  Next, let $v = u'/u$.  We have
$u' - vu = 0$, so
$$
\sum_{i=0}^{n-1}\alpha^i(w_i' -vw_i) = 0.
$$
Consequently for each $i$ we have $w_i' -vw_i = 0$.
Since $u \ne 0$ there is $i$ such that $w = w_i \ne 0$.
Then for each $i$ we have $(w_i/w)' = 0$, so
$\beta_i = w_i/w$ are constants and we have
$$
u = \sum_{i=0}^{n-1}\alpha^i \beta_iw = w\sum_{i=0}^{n-1}\alpha^i \beta_i
= cw
$$
with $c = \sum_{i=0}^{n-1}\alpha^i \beta_i$ being a constant.
\end{proof}

We will need the following lemma, which is a specialised
version of \cite{Ros} Theorem 2:

\begin{lemma}\label{ros-struct}
Let $E$ be a differential field of characteristic $0$, $F$
a differential extension of $E$ having the same constants
with $F$ algebraic over $E(\eta)$ for some given $\eta \in F$.
Assume that $E$ algebraically closed in $F$.
Suppose that $u, v \in F$ are such that $\frac{u'}{u} = v'$.
If $\eta' \in k$, then $u \in E$ and there is
constant $c$ such that $v - c\eta \in E$.  If
$\frac{\eta'}{\eta} \in E$, then $v \in E$ and there
are integers $n$ and $j$ such that $\frac{u^n}{\eta^j}\in E$.
\end{lemma}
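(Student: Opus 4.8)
The plan is to peel off the degenerate case in which $\eta$ is algebraic over $E$, and then in the main case simply invoke \cite{Ros} Theorem 2, using the hypothesis that $E$ is algebraically closed in $F$ to strengthen ``algebraic over $E$'' to ``lies in $E$''. First I would observe that if $\eta$ is algebraic over $E$ then, since $E$ is algebraically closed in $F$, in fact $\eta \in E$, hence $F$ is algebraic over $E$ and therefore $F = E$; both assertions are then immediate, taking $c = 0$ in the primitive case and $n = 1$, $j = 0$ in the exponential case. Likewise, if $u$ is a constant then $v' = u'/u = 0$, so $v$ is a constant too, and both lie in $E$ because $F$ and $E$ have the same field of constants. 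So we may assume that $\eta$ is transcendental over $E$ and that $u$ is nonconstant; then $F$ is a finite algebraic extension of the rational function field $E(\eta)$, with $E$ algebraically closed in it and with the same constants — precisely the setting of \cite{Ros} Theorem 2.

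Next I would apply that theorem to the relation $u'/u = v'$, read as the one-term instance of $\sum c_i u_i'/u_i = v'$ with $c_1 = 1$ (a single nonzero constant, hence $\mathbb{Q}$-linearly independent). When $\eta' \in E$, it yields that $u$ is algebraic over $E$ and that $v - c\eta$ is algebraic over $E$ for some constant $c$; since $E$ is algebraically closed in $F$, this gives $u \in E$ and $v - c\eta \in E$. When instead $\eta'/\eta \in E$, the exponential part of the structure theorem applied to the same relation gives that $v$ is algebraic over $E$ and that $u^{n}/\eta^{j}$ is algebraic over $E$ for suitable integers $n \ge 1$ and $j$; algebraic closedness again upgrades these to $v \in E$ and $u^{n}/\eta^{j} \in E$. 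Note that for transcendental $\eta$ the two hypotheses cannot hold simultaneously, since $\eta' \in E$ together with $\eta'/\eta \in E$ would force $\eta \in E$, so the two cases do not interfere.

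The step that needs care is purely one of bookkeeping: checking that \cite{Ros} Theorem 2 is being used in its algebraic-extension form (over $F$, not merely over $E(\eta)$), with the ``same constants'' assumption in force, and that its conclusion is literally the primitive/exponential dichotomy quoted above. The single ingredient beyond Rosenlicht's theorem — and the reason the present statement is cleaner than his, and than Lemma~\ref{l-closed1}, where only algebraicity over the base field is obtained — is the hypothesis that $E$ is algebraically closed in $F$; this is also exactly what makes the algebraic-$\eta$ case collapse at once. I do not anticipate any genuinely hard point.
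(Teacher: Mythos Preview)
Your proposal is correct and is exactly the intended argument: the paper gives no proof of this lemma at all, merely stating that it is ``a specialised version of \cite{Ros} Theorem 2'', and your write-up spells out precisely how that specialisation goes --- apply Rosenlicht's structure theorem to the one-term relation $u'/u = v'$ and use the hypothesis that $E$ is algebraically closed in $F$ to turn ``algebraic over $E$'' into ``in $E$''. Your handling of the degenerate cases ($\eta$ algebraic over $E$, or $u$ constant) is appropriate bookkeeping that the paper omits.
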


Now we give examples where our properties are violated.

\begin{example}  Let $C = {\mathbb Q}(a, b)$ where $a$ and $b$ are
transcendental constants.  Let $E = C(x, v, w)$ where $x' = 1$,
$v' = 1/(x - a)$, $w' = 1/(x - b)$.  That is, $C$ is field of
constants and we extend $C(x)$ by $\log(x - a)$ and $\log(x - b)$.
Let $\sigma$ be an automorphism of $E$ such that $\sigma(a) = b$,
$\sigma(b) = a$, $\sigma(x) = x$, $\sigma(v) = w$, $\sigma(w) = v$.
It is easy to check that $\sigma$ is a differential automorphism
and $\sigma^2$ is identity.  Let $F = \{f \in E: \sigma(f) = f\}$.
Then $E$ is algebraic of degree $2$ over $F$.  By Lemma \ref{l-closed2}
$E$ is log-explicit and l-closed.  By definition of log-explicit
also $F$ is log-explicit.  However, $F$ is not l-closed.
Namely, $v$ is a logarithmic element in an algebraic extension
of $F$.  If $F$ were l-closed by Lemma \ref{log-closed} there
would be constant $c$ such that $v + c \in F$.  But
$\sigma(v+C) = w+C \ne v+C$, so this would be a contradiction.
\end{example}

\begin{example}  This is variant of predator-prey example
from \cite{PrSi}.
  Let $C = {\mathbb Q}(a, b)$ where $a$ and $b$ are
transcendental constants.  Let $E = C(x, y)$.
We define derivation on $K$ by
formulas
$$
x' = 1,
$$
$$
y' = \frac{-by + xy}{ax - xy}.
$$
By routine calculation we check that
$$
(x + y)' - b\frac{x'}{x} - a\frac{y'}{y} = 0.$$
It is easy to check that differential forms $\omega_1 = dx$ and
$$
\omega_2 = d(x + y) - b\frac{dx}{x} - a\frac{dy}{y}
$$
are linearly independent over $K$, so by \cite{Ros} Theorem 1
transcendental degree of $K$ over constants is at least $2$.
This means that constant field of $K$ is $C$.  Now, suppose
$$
v_0' + \sum c_i\frac{v_i'}{v_i} = 0
$$
with $v_i$ in an algebraic extension $L$ of $K$.
By \cite{Ros} Theorem 1 there are constants $\alpha$ and $\beta$
such that
$$
dv_0 + \sum c_i\frac{dv_i}{v_i} - \alpha\omega_1 - \beta \omega_2 = 0
$$
in $\Omega_{L/C}$.  If $\beta = 0$ this means that $v_0 - \alpha x$ and $v_i$
for $i>0$ are constants.  Otherwise without loss of generality
we may multiply $v_0$ and $c_i$ by $1/\beta$ and in the following
only consider $\beta = 1$.

Consider linear space $V$ over ${\mathbb Q}$ spanned
by $-b$, $-a$, and $c_i$.  Without loss of generality we may
assume that $e_1 = -b$, $e_2 = -a$, and $e_i = c_i$ for $i>2$
form basis of $V$ (we renumber $c_i$-s and prepend zeros if needed).  
Now we write $c_1$ and $c_2$ as rational linear combination of $e_i$-s.
Let $m$ be common denominator of coefficients of linear combination.
Then we can write $mc_i = \sum n_{i,j}e_j$ with integer $n_{i,j}$.
Put $w_1 = x$, $w_2 = y$, $w_i = v_i$ for $i > 2$.
We get equation
$$
d(v_0 - (x + y) - \alpha x) + \sum_i e_i
\left(\frac{dw_i}{w_i} + \frac{1}{m}\sum_{j=1}^2n_{j,i}\frac{dv_j}{v_j}
\right)
$$
which implies
$$
md(v_0 - (x + y) - \alpha x) + \sum_i e_i\frac{du_i}{u_i} = 0
$$
where $u_i = w_i^m\prod_{j=1}^2v_j^{n_{j,i}}$.
By \cite{Ros} Proposition 4 this implies that $u_i$
are algebraic over $C$, hence are constant.  Also, note
that matrix $\{n_{i,j}\}$ with $i=1,2$, $j=1,2$ must be
invertible over ${\mathbb Q}$ (otherwise $dx/x$ and $dy/y$
would be linearly dependent over constants).  So modulo
multiplicative constants some powers of $v_i$, $i=1,2$
are power products of $x$ and $y$.  In other words
modulo multiplicative constants some powers of $v_1$
and $v_2$ are in $K$.  Using expression for $u_i$ with
$i > 2$ and known fact about $v_1$ and $v_2$ in similar
way we see that some powers of $v_i$ for $i>2$ are in $K$
modulo multiplicative constants.  So $K$ is l-closed.
In the same way we show that there are no nonconstant
exponential elements in $L$.  Namely, equality
$v_0' = v_1'/v_1$ means $\beta = 0$ (otherwise matrix
$\{n_{i,j}\}$ above would be singular which leads to
contradiction).  Now \cite{Ros} Proposition 4 this
implies that $v_1$ is algebraic over constants, hence
constant.  So $K$ is not log-explicit.
\end{example}


In case of log-explicit fields we can strengthen Lemma \ref{ros-struct}:
\begin{lemma}\label{ros-struct2}
Assume that $k$ is log-explicit differential field of
characteristic $0$, $K$ is algebraic over
$k(s, t)$, $t'/t = s'$, transcendental degree of $K$ over $k$ is $1$,
$K$ has the same constants as $k$ and $k$ is algebraically closed in $K$.
Suppose that $u, v \in K$ are such that $\frac{u'}{u} = v'$.
Then there exist rational number $r$ such that
$v - rf \in k$.
\end{lemma}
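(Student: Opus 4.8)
The plan is to reproduce the differential-forms argument from the proof of Lemma \ref{l-closed2} and then to read the rationality of the resulting constant off the log-explicitness of $k$. Let $C$ be the common constant field of $k$ and $K$ and set $\omega_1 = \frac{dt}{t} - ds \in \Omega_{K/C}$. Since $u'/u = v'$, both $\omega_1$ and $dv - \frac{du}{u}$ map to $0$ under the derivation $d\xi \mapsto \xi'$; as in the proof of Lemma \ref{l-closed2}, \cite{Ros} Theorem 1 then shows that these two forms are linearly dependent over $C$, and since transcendence degree of $K$ over $k$ is $1$ with $k$ algebraically closed in $K$ (so that at least one of $s$ and $t$ is transcendental over $k$), we have $\omega_1 \neq 0$. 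Consequently there is a constant $c \in C$ with
$$ d(v - cs) + c\frac{dt}{t} - \frac{du}{u} = 0 $$
in $\Omega_{K/C}$, and everything now rests on showing that $c$ may be taken in ${\mathbb Q}$.

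So suppose $c \notin {\mathbb Q}$. Then $c$ and $1$ are linearly independent over ${\mathbb Q}$, so \cite{Ros} Proposition 4 applied to the displayed identity shows that $v - cs$, $t$ and $u$ are algebraic over $k$; being elements of $K$ and $k$ being algebraically closed in $K$, they lie in $k$. Write $g = v - cs \in k$. We cannot have $s \in k$, since then $s, t \in k$ would make $K$ algebraic over $k$, contradicting the hypothesis on transcendence degree. From $t \in k$ we get $s' = t'/t \in k$, and substituting $v = g + cs$ into $v' = u'/u$ yields
$$ \frac{u'}{u} - c\frac{t'}{t} = g' $$
with $u$, $t$ and $g$ all in $k$. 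Here the constants $1$ and $-c$ are linearly independent over ${\mathbb Q}$, so log-explicitness of $k$ (applied with the algebraic extension $E = k$) forces $t$ to be an exponential element of $k$, i.e.\ there is $p \in k$ with $p' = t'/t = s'$. Then $s - p$ is a constant, so $s \in k$, a contradiction. Hence $c \in {\mathbb Q}$.

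Finally, write $c = n/m$ with $m$ a positive integer and $n$ an integer, and put $w = t^{n}/u^{m} \in K$. Multiplying the displayed identity by $m$ turns it into $d\big(m(v - cs)\big) + \frac{dw}{w} = 0$, so by \cite{Ros} Proposition 4 the element $m(v - cs)$ is algebraic over $k$ and therefore lies in $k$. Thus $v - cs \in k$, and $r = c$ is the desired rational number.

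I expect the main obstacle to be the middle paragraph. The points to verify carefully there are: that in the branch $c \notin {\mathbb Q}$ Proposition 4 of \cite{Ros} really deposits $t$, $u$ and $v - cs$ in $k$ itself (this is the one step that genuinely uses that $k$ is algebraically closed in $K$), so that the derived relation $\frac{u'}{u} - c\frac{t'}{t} = g'$ lives inside $k$ and log-explicitness of $k$ is applicable to it; and that at each appeal to \cite{Ros} one keeps straight exactly which finite set of constants is being asserted to be ${\mathbb Q}$-linearly independent. The two degenerate possibilities $\omega_1 = 0$ and $s \in k$ each cost only a line and collapse at once against the hypothesis on transcendence degree.
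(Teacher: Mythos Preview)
Your proof is correct and follows essentially the same route as the paper's: obtain the constant $c$ from the linear dependence of $\frac{dt}{t}-ds$ and $\frac{du}{u}-dv$, and in the irrational branch use Proposition~4 of \cite{Ros} together with log-explicitness to force $s\in k$, a contradiction. Your treatment of the rational branch is in fact slightly more careful than the paper's: the paper asserts ``by \cite{Ros} Proposition~4 we have $v-cs$ algebraic over $k$'' before the case split, which only follows directly when $1$ and $c$ are ${\mathbb Q}$-independent; your device of setting $w=t^n/u^m$ and applying Proposition~4 with the single coefficient $1$ is exactly what is needed to make that step rigorous when $c\in{\mathbb Q}$.
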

\begin{proof}
As in proof of Lemma \ref{l-closed2} we consider differential
forms $\frac{dt}{t} - ds$ and $\frac{du}{u} - dv$ getting
equality
$$
\frac{du}{u} - cs\frac{dt}{t} - d(v - cs)
$$
with constant $c$.  By \cite{Ros} Proposition 4
we have $v - cs$ algebraic over $k$, hence in $k$.
If $c$ is rational, then we are done.
Otherwise, if $c$ is irrational by \cite{Ros} Proposition 4
we see that $v - cs$, $u$ and $t$ are algebraic over $k$,
hence in $k$.
We write
$$
\frac{u'}{u} - c\frac{t'}{t} - (v - cs)' = 0.
$$
Since $k$ is log-explicit
$t$ is an exponential element in $k$.  In other words,
there is $\eta\in k$ such that $\eta' = t'/t$.  Then
$(\eta - s)' = 0$.  Since $K$ have the same constants as $k$
we have $s \in k$. But this gives contradiction with assumption that
$K$ has transcendental degree $1$ over $k$.
\end{proof}

Our results lead to generalisation of Risch structure theorem:

\begin{lemma}\label{risch-str}
Assume that $k$ is l-closed differential field of characteristic $0$.
Let $W$ be a linear space over rational numbers ${\mathbb Q}$ generated
by derivatives of
logarithmic elements in $K$.  Assume that $k(\theta)$ is
extension of $k$ with the same constants.  If $f \in k$,
$\theta' = \frac{f'}{f}$ then $\theta$ is transcendental over $k$
if and only if $\frac{f'}{f} \notin W$.  Similarly,
if $f \in E$, $\theta' = f'\theta$, then $\theta$ is
transcendental over $k$ if and only if $f' \notin W$.
\end{lemma}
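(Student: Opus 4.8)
The plan is to prove the two biconditionals by proving their contrapositives in both directions, reducing everything to the known dichotomy for logarithmic and exponential towers. I would treat the logarithmic case first ($\theta' = f'/f$ with $f \in k$) and then obtain the exponential case ($\theta' = f'\theta$) by an essentially parallel argument.

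First, the easy direction in the logarithmic case: if $\theta$ is algebraic over $k$, then $\theta$ lies in some algebraic extension $E$ of $k$, and $\theta$ is by construction a logarithmic element (since $\theta' = f'/f$ with $f \in k \subseteq E$). By Lemma \ref{log-closed} there is a constant $c$ with $\theta - c \in k$, so $f'/f = \theta' = (\theta-c)' \in k'$, and more to the point $f'/f$ is the derivative of the logarithmic element $\theta - c$, hence $f'/f \in W$. For the converse, suppose $f'/f \in W$, so $f'/f = \sum r_i v_i'$ where each $v_i$ is a logarithmic element in some algebraic extension $K$ of $k$ and $r_i \in {\mathbb Q}$. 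Clearing denominators, $m f'/f = \sum n_i v_i'$ with $n_i \in {\mathbb Z}$; pick $u_i \in K$ with $u_i'/u_i = v_i'$, so that $(f^m / \prod u_i^{n_i})'/(f^m/\prod u_i^{n_i}) = 0$, i.e. $f^m = c_0 \prod u_i^{n_i}$ for a constant $c_0$. Since $k$ is l-closed, each $u_i$ has a power lying in $k$ up to a multiplicative constant (applying the l-closed condition after selecting a ${\mathbb Q}$-linearly independent subfamily of the relevant exponents — this bookkeeping is routine), whence a suitable power of $f$, equivalently of $\theta$ after integrating $f'/f$, forces a polynomial relation over $k$; more directly, $m\theta' = \sum n_i v_i'$ shows $\theta$ differs by a constant from a ${\mathbb Q}$-combination of the $v_i$, all of which are algebraic over $k$ by the l-closed hypothesis applied through Lemma \ref{log-closed}, so $\theta$ is algebraic over $k$.

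For the exponential case, the argument is dual: if $\theta$ is algebraic over $k$ with $\theta'/\theta = f'$, then $\theta$ is an exponential element in an algebraic extension of $k$, so by Lemma \ref{exp-const} combined with the l-closed hypothesis (some power $\theta^n = c_0 w$ with $w \in k$) we get $f' = \theta'/\theta = (1/n) w'/w$, exhibiting $f'$ as the derivative of the logarithmic element $(1/n)\log w$... more precisely $n f' = w'/w$, so $f'$ — up to the rational factor $1/n$ — is a derivative of a logarithmic element, hence $f' \in W$. Conversely if $f' \in W$, write $f' = \sum r_i v_i'$, clear denominators to $m f' = \sum n_i v_i'$ with $v_i$ logarithmic in an algebraic extension $K$, pick $u_i$ with $u_i'/u_i = v_i'$; then $(\theta^m / \prod u_i^{n_i})' = 0$ gives $\theta^m = c_0 \prod u_i^{n_i}$, and since $k$ is l-closed the $u_i$ have powers in $k$ up to constants, so a power of $\theta$ lies in $k$ up to a constant and $\theta$ is algebraic over $k$.

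The main obstacle I anticipate is the bookkeeping in the converse directions: the definition of l-closed requires the constants $c_i$ to be ${\mathbb Q}$-linearly independent, but the $r_i$ (or $n_i$) arising from membership in $W$ need not be, and the logarithmic elements $v_i$ may live in different algebraic extensions. So before invoking l-closedness I must pass to a common algebraic extension, collect the $v_i$ into a maximal ${\mathbb Q}$-linearly independent subset, rewrite the relation $m f'/f = \sum n_i v_i'$ (resp. $m f' = \sum n_i v_i'$) in terms of that subset by forming the appropriate power-products $u_i^{n_i}$, and only then apply the l-closed condition to conclude that each surviving $u_i$ has a power in $k$ modulo a multiplicative constant — and hence that $\theta$ satisfies a nontrivial algebraic relation over $k$. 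This is the same regrouping trick already used in Lemma \ref{decomp2} and in the predator–prey Example, so once it is set up correctly the rest is immediate.
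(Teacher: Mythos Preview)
Your proposal is correct and follows essentially the same approach as the paper; in particular your explicit appeal to Lemma~\ref{log-closed} for the direction ``$\theta$ algebraic $\Rightarrow f'/f \in W$'' is if anything cleaner than the paper's terse unified treatment via $u^m \in k$. The one place you over-complicate is the converse direction: the ``$K$'' in the definition of $W$ (and the ``$E$'' later) is a typo for $k$, so the $v_i$ already lie in $k$; an integer combination $v = \sum n_i v_i$ of logarithmic elements of $k$ is again a logarithmic element of $k$ (with exponential witness $\prod u_i^{n_i} \in k$), and then $n\theta - v$ (resp.\ $\theta^n/u$) being constant together with the equal-constants hypothesis gives $\theta \in k$ (resp.\ $\theta^n \in k$) directly --- l-closedness is not invoked at all in this direction, and the bookkeeping obstacle you anticipate does not arise.
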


\begin{proof}
When $\theta' = \frac{f'}{f}$ and $\frac{f'}{f} \in W$, then
there exist logarithmic element $v \in k$ and integer $n$ such
that $n\frac{f'}{f} = v'$.  Then $(n\theta - v)' = 0$ so
$(n\theta - v)$ is a constant, hence $c = (n\theta - v) in k$,
so $\theta = (c + v)/n \in k$.  Similarly, when $\theta' = f'\theta$,
and $f' \in W$,
then there exist logarithmic element $v \in k$ with corresponding
exponential element $u$ and integer $n$ such that
$nf' = v$.  Then
$$
\left(\frac{\theta^n}{u}\right)' = 0
$$
so it is a constant, hence in $k$.  Consequently $\theta^n = cu$,
so $\theta$ is algebraic over $K$.  This proves implication in
one direction.

It remains to prove that when $\theta$ is algebraic over $k$, then
$\frac{f'}{f} \in W$ (respectively $f' \in W$).
When $\theta' = \frac{f'}{f}$
write $u = f$ and $v = \theta$.
When $\theta' = f'\theta$ write $u = \theta$ and $v = f$.
Then $v$ is logarithmic element in $k(\theta)$ and $u$ is
corresponding exponential element.  Since $k(\theta)$ is
algebraic extension of $k$ and $k$ is l-closed some power $u^m$ of
$u$ is in $k$ modulo multiplicative constants.  But $k(\theta)$
and $k$ have the same constants, so in fact $u^m \in k$.  Then
$mv$ is a logarithmic element in $k$ so $v \in W$.
\end{proof}

{\bf Remark}.  Of course utility of Lemma \ref{risch-str} depends
on our ability to compute space $W$.  For towers it is natural
to use inductive procedure.  If in given step we get no new
exponential elements (for example when
we add a nonelementary primitive), then the space $W$ remains
unchanged.  Similarly, in case of algebraic extension space
$W$ remains unchanged.  Lemma \ref{ros-struct2} covers case of extensions
by exponential, by logarithm and by Lambert $W$ function.

\section{Structure of integrals}

We say that a differential field $E$ is a gamma extension of
$F$ if there exists $\theta_1, \dots, \theta_n \in E$ such that
$E = F(\theta_1, \dots, \theta_n)$ and for each $i$, $1\leq i \leq n$,
one of the following holds
\begin{enumerate}
\item $\theta_i$ is algebraic over $F_i$
\item $\frac{\theta_i'}{\theta_i} = u'$ for some $u \in F_i$
\item $\theta_i' = \frac{u'}{u}$ for some $u \in F_i$
\item $\theta_i' = \frac{u'}{v}$ for some $u, v \in F_i$ such that
$u' = v'u$
\item there are $w, u, v \in F_i$ and integers $k, m$ such that
$\theta_i' = wu'$, $u' = v'u$, $w^k = v^m$, $-k< m < 0$.
\item there are $w, u, v \in F_i$ and an irrational constant $a$
 such that $\theta_i' = w'u$, $u' = ((a - 1)v' - w')u$,
 $v' = w'/w$.
\end{enumerate}
where $F_i = F(\theta_1, \dots, \theta_{i-i})$.  Intuitively
clauses 2 to 6 above mean $\theta_i = \exp(u)$,
$\theta_i = \log(u)$, $\theta_i = \Ei(v)$,
$\theta_i = \Gamma((k + m)/k, -v)$ or 
$\theta_i = \Gamma(a, v)$ respectively.
However, definitions above are a bit more general.  Namely,
let $u = x$ and $\theta = c\exp(x)$ where $c \ne 0$ is a constant.
We have $\frac{\theta'}{\theta} = u'$, so condition 2 above
is satisfied.  In other words, differential conditions
define exponential up to multiplication by constant.
Similarly, logarithm is defined only modulo additive
constant.

If the point 6 above is satisfied with $F_i$ replaced by $F$
abusing notation we
say that $w'u$ is an irrational gamma term over $F$
(strictly speaking we should consider tuple $(a, w, v, u)$).

If in the definition of gamma extension we allow only cases 1 to 5,
then we call the resulting extension rational gamma extension.

\begin{theorem}\label{gamma-str0}
Let $E$ be a log-explicit differential field of
characteristic $0$.
If $f$ has
integral in a gamma extension of $E$, then
there is algebraic extension $\bar E$ of $E$ such that
we can write
\begin{equation}\label{inteq}
f = v_0' + \sum_{i\in I_1} c_i\frac{v_i'}{v_i} +
    \sum_{i \in I_2} c_i v_i'\frac{u_i}{v_i} +
    \sum_{i \in I_3} c_iv_i'w_iu_i +
    \sum_{i \in I_4} c_iw'_iu_i
\end{equation}
where $c_i\in  \bar E$ are constants, 
$v_0, v_i, u_i, w_i\in \bar E$, $u_i' = v_i'u_i$, for
$i \in I_2\cup I_3$, $u_i' = ((a_i - 1)v_i' - w_i')u_i$
for $i \in I_4$, $a_i \in \bar E$ are irrational constants,
$v_i' = w_i'/w_i$ for $i \in I_4$,
 $k_i$
are positive integers,
$w_i^{k_i} = v_i^{m_i}$ for $i \in I_3$, with $m_i$ being
integers such that $-k_i< m_i < 0$ and $k_i$ and $m_i$ are relatively
prime.

If $f$ has integral in a rational gamma extension, then the
sum over $I_4$ does not appear in the result.  Also in
such case we
can
drop assumption that $E$ is log-explicit.
\end{theorem}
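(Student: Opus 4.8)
The plan is to prove this by induction on the number $n$ of generators $\theta_1,\dots,\theta_n$ in the gamma tower $E = F(\theta_1,\dots,\theta_n)$ above $F = E$, working from the top down: if $f$ has an integral $g$ in $E(\theta_1,\dots,\theta_n)$, I express $g$ using the top generator $\theta = \theta_n$, extract the constraint that $g' = f \in E(\theta_1,\dots,\theta_{n-1})$ imposes, and push the problem down one level. The base case $n=0$ is trivial since then $f$ already has an elementary-looking integral of the required shape (indeed $f = g'$ with no new terms). The Liouville–Ostrowski theorem gives the starting shape of $g$: after passing to a suitable algebraic extension $\bar E$ with the same constants so that the constants $c_i$ lie in it, we may write $g = v_0 + \sum c_i \log v_i$ with $v_0, v_i$ in the tower. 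The real content is analyzing how each of the six types of generator contributes.

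First I would handle the "easy" generators. For an algebraic $\theta$ (type 1), Lemma~\ref{l-closed2} (or rather its algebraic-extension part) together with Lemma~\ref{risch-str} and the fact that log-explicit/l-closed is preserved lets me descend without changing the shape of the answer: any logarithmic or exponential element is controlled downstairs. For $\theta$ exponential or logarithmic over $F_i$ (types 2, 3), I apply Lemma~\ref{decomp2} or Lemma~\ref{decomp3} respectively to the logarithmic-derivative sum: these lemmas show that a sum $\sum c_i v_i'/v_i$ with $v_i \in K(\theta)$ reduces, modulo a term $a\eta'$ absorbed into $v_0'$ and a proper rational part, to a sum with $\bar v_i \in K$. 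Comparing degrees in $\theta$ (the proper rational part, the polynomial part) and using transcendence of $\theta$ then forces the expression for $f$ to live in $F_i$ with the same structural form, again invoking Lemma~\ref{risch-str} to see that no genuinely new logarithm or exponential sneaks in.

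The heart of the matter is the three new types (4, 5, 6): $\theta' = u'/v$ with $u' = v'u$ (an $\Ei$-like term), $\theta' = wu'$ with the norm condition $w^k = v^m$, $-k<m<0$ (a rational incomplete-gamma term), and $\theta' = w'u$ with $u' = ((a-1)v' - w')u$, $a$ irrational (an irrational incomplete-gamma term). Here $\theta$ is transcendental over $F_i$ but $g = v_0 + \sum c_i \log v_i$ must have derivative in $F_i$, so I must show $v_0$ is at most linear in $\theta$ with constant leading coefficient and the $v_i$ are (up to constants) in $F_i$: the point is that $\theta'$ is \emph{not} a logarithmic derivative of anything in $F_i(\theta)$ of positive degree, which a degree/pole count in $\theta$ establishes, exactly as in the elementary case. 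The surviving $\theta$-linear term $c\theta$ in $v_0$ then contributes precisely a term $c\theta' = c u'/v$, $c v' w u$, or $c w' u$ to $f$ — i.e. a summand of the type indexed by $I_2$, $I_3$, or $I_4$ — while everything else is pushed into $F_i$ and handled by induction. I expect the main obstacle to be the bookkeeping in type 5 and 6, where the relations $w^k = v^m$ and $v' = w'/w$ interact: one must check that the descended expression still has coefficients and exponents meeting the relative-primality and range constraints $-k_i < m_i < 0$, and that in the type-6 case the irrational constant $a_i$ genuinely must appear (which is why the rational-gamma-extension variant has no $I_4$ sum, and why log-explicitness — needed to rule out $a$ being "absorbed" via Lemma~\ref{ros-struct2} — is dispensable there). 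Collecting the linear-in-$\theta$ pieces across all $n$ descent steps yields the four sums in \eqref{inteq}.
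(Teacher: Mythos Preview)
Your induction has a structural gap. When descending through a generator $\theta_i$ of type 4, 5, or 6, you correctly extract an $I_2$, $I_3$, or $I_4$ term with data $u,v,w \in F_i$. But those data do not automatically lie in $E$: they must still be pushed down through the remaining generators $\theta_{i-1},\dots,\theta_1$, which may themselves be exponentials or logarithms. Your treatment of types 2 and 3 only discusses descending $v_0'$ and the logarithmic-derivative sum via Lemmas~\ref{decomp2} and~\ref{decomp3}; it says nothing about how an already-extracted $\Ei$ or gamma term, with $u_i,v_i,w_i$ sitting in some intermediate $F_j$, survives passage through a lower exponential or logarithmic step. This is precisely the content of Lemmas~\ref{irr-gamma-exp}--\ref{gamma-str1-prim} in the paper, and it is where the log-explicit hypothesis does its real work: to descend an irrational gamma term through a logarithmic or nonelementary-primitive $\theta$ one needs Lemma~\ref{ros-struct2}, which requires the intermediate field to be log-explicit (established inductively via Lemmas~\ref{l-closed1} and~\ref{l-closed2}). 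Your remark that log-explicitness is needed ``to rule out $a$ being absorbed'' gestures at this but does not carry out the descent.

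The correct inductive statement is therefore not ``$f$ has an integral in a tower of height $n$ over $E$'' but rather ``equation~\eqref{inteq} holds with all $v_i,u_i,w_i$ algebraic over $F_i$'', starting at $i=n$ (where it is trivially $f=g'$ with all sums empty) and descending to $i=0$; at each step the \emph{entire} right-hand side, including whatever $I_2,I_3,I_4$ terms are already present, must be shown to descend. Your appeal to Liouville--Ostrowski for the ``starting shape of $g$'' is thus misplaced: no Liouville-type theorem is invoked at the top of the tower. Two smaller points: Lemma~\ref{l-closed2} concerns $t'/t=s'$ extensions, not algebraic ones (the algebraic step is trivial since ``algebraic over $F_i$'' and ``algebraic over $F_{i-1}$'' coincide); and the tool that controls exponential and logarithmic elements during the descent is Lemma~\ref{ros-struct}, not Lemma~\ref{risch-str}.
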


To prove Theorem \ref{gamma-str0} we use methods of Theorem 1.1
from \cite{SSC}.  By assumption equality \ref{inteq}
holds, but with $v_i$, $u_i$ and $w_i$ in some gamma extension
of $E$.  Gamma extension is a tower with each step either
adding algebraic element, or a logarithm or an exponential
or a nonelementary primitive.  Proceeding inductively it is
enough to prove that if $f \in E$ and $v_i, u_i, w_i$ are in
$F$ which is algebraic over $E(\eta)$, then we can find
$v_i, u_i, w_i$ in $E$.  Before actual proof we need several
lemmas.

\begin{lemma}\label{irr-gamma-exp}
Let $E$ be a differential field of characteristic $0$.
Assume $F$ is algebraic over $E(\eta)$, $\eta$ is an exponential
over $E$, $E$ is algebraically closed
in $F$ and $F$ has the same constants as $E$.
If $w'u$ is an irrational gamma term in $F$, then $w \in E$, $v \in E$ and
there are integers $n$ and $j$ such that
$\frac{u^n}{\eta^j} \in E$.
\end{lemma}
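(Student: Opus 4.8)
\emph{Plan.} The whole statement should fall out of two applications of Lemma~\ref{ros-struct}. First I would unpack the hypothesis: since $w'u$ is an irrational gamma term in $F$, clause~6 of the definition of gamma extension gives an irrational constant $a$ and elements $w,v,u\in F$ with $v'=w'/w$ and $u'=((a-1)v'-w')u$; because $F$ and $E$ have the same constants, $a\in E$. Note also that ``$\eta$ is an exponential over $E$'' means precisely $\eta'/\eta\in E$, so the second alternative of Lemma~\ref{ros-struct} is available for any pair $(p,q)$ of elements of $F$ with $p'/p=q'$.

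Next I would apply Lemma~\ref{ros-struct} to the pair $(w,v)$, which satisfies $w'/w=v'$: since $\eta'/\eta\in E$ this yields $v\in E$ (this is all I need from the first application). The key move is then to fold $w'$ and $v'$ into a single logarithmic derivative: put $\phi=w-(a-1)v$, which lies in $F$ since $v\in E\subseteq F$ and $a\in E$. Differentiating, $\phi'=w'-(a-1)v'$, so $\dfrac{u'}{u}=(a-1)v'-w'=-\phi'=(-\phi)'$. Hence Lemma~\ref{ros-struct} applies to the pair $(u,-\phi)$ and gives $-\phi\in E$ together with integers $n$ and $j$ such that $u^n/\eta^j\in E$. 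Finally $\phi\in E$, $v\in E$ and $a\in E$ force $w=\phi+(a-1)v\in E$, which together with $v\in E$ and $u^n/\eta^j\in E$ is exactly the claim.

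\emph{Where the work is.} There is no deep obstacle here; the only non-formal point is spotting the substitution $\phi=w-(a-1)v$, which is what turns the two-term expression $(a-1)v'-w'$ into a genuine derivative in $F$ once $v$ has been placed in $E$. The irrationality of $a$ is not used at all in this lemma --- it serves only to keep clause~6 distinct from clause~5 --- so $a$ enters merely as a constant of $E$. The points to watch are purely bookkeeping: that $a\in E$ because $F$ and $E$ share constants, that $\phi$ genuinely lies in $F$ before the second invocation of Lemma~\ref{ros-struct}, and that ``exponential over $E$'' supplies the hypothesis $\eta'/\eta\in E$ needed for the second conclusion of that lemma.
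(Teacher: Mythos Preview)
Your argument is correct and is essentially the paper's own proof: two applications of Lemma~\ref{ros-struct}, one to the pair $(w,v)$ and one to the pair $(u,(a-1)v-w)$ (your $-\phi$), combined with the observation that $a$ lies in the common constant field. The paper merely applies the two instances in the opposite order and does not first place $v$ in $E$ before forming $(a-1)v-w$; indeed $\phi=w-(a-1)v$ already lies in $F$ simply because $w,v,a\in F$, so your preliminary step of securing $v\in E$ before defining $\phi$ is harmless but not needed.
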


\begin{proof}
Applying Lemma \ref{ros-struct} to $u$ and $(a-1)v - w$ we get
$\frac{u^n}{\eta^j} \in E$ for appropriate $n$ and $j$
and $(a-1)v - w \in E$.  Similarly, applying Lemma \ref{ros-struct} to
$w$ and $v$ we get $v \in E$.  Consequently also $w \in E$.
\end{proof}

\begin{lemma}\label{gamma-str1-exp}
Let $E$ be a differential field of characteristic $0$, $F$ be a differential
field algebraic over $E(\eta)$, $\eta'/\eta = v'$ with $v\in E$.
Suppose that $F$
has the same constants as $E$ and equation \ref{inteq} holds with $f\in E$,
$v_i, u_i, w_i \in F$, then we can find new
$v_i, u_i, w_i$ algebraic over $E$ and new constant $c_i$
such that \ref{inteq} holds.
\end{lemma}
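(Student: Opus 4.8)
The plan is to shrink, in two stages, the field containing the data $v_0, v_i, u_i, w_i$: first from $F$ down to $E(\eta)$ by taking a trace, and then from $E(\eta)$ down to $E$ by comparing $\eta$-adic partial fraction decompositions. First I would dispose of two easy reductions. If $\eta$ is algebraic over $E$ then $F$ is algebraic over $E$ and the given expression already does the job, so assume $\eta$ is transcendental over $E$. Replacing $E$ by its relative algebraic closure in $F$ (this adds no constants, since $F$ has the same constants as $E$; it leaves $\eta$ an exponential element over the new base; and it only makes ``algebraic over $E$'' weaker, by transitivity of algebraicity) I may also assume $E$ is algebraically closed in $F$. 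Then I would read off the shape of the exponential factors: for $i \in I_2 \cup I_3$, Lemma \ref{ros-struct} applied to the pair $(u_i, v_i)$ (recall $u_i'/u_i = v_i'$) gives $v_i \in E$ and integers $n_i, j_i$ with $u_i^{n_i}/\eta^{j_i} \in E$, and for $i \in I_3$ the relation $w_i^{k_i} = v_i^{m_i} \in E$ then forces $w_i \in E$; for $i \in I_4$, Lemma \ref{irr-gamma-exp} gives $w_i, v_i \in E$ and $u_i^{n_i}/\eta^{j_i} \in E$. Hence every ``special'' term (those indexed by $I_2, I_3, I_4$) now has the form $c_i h_i u_i$ with $h_i \in E$ (namely $h_i = v_i'/v_i$, $v_i' w_i$, $w_i'$ respectively) and $u_i^{n_i} = g_i \eta^{j_i}$ for some $g_i \in E$.

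For the trace step I would set $d = [F : E(\eta)]$ and apply $\Tr_{F/E(\eta)}$ to \eqref{inteq}. Since $f$, the $c_i$, and the $v_i, w_i$ for special $i$ all lie in $E(\eta)$ and are fixed, the term $v_0'$ becomes $(\Tr_{F/E(\eta)} v_0)'$, each $c_i v_i'/v_i$ with $i \in I_1$ becomes $c_i (\Norm_{F/E(\eta)} v_i)'/\Norm_{F/E(\eta)} v_i$, and each special term becomes $c_i h_i \Tr_{F/E(\eta)}(u_i)$. The key observation is that $\Tr_{F/E(\eta)}(u_i)$ equals $d u_i$ if $u_i \in E(\eta)$ and equals $0$ otherwise: for any $E(\eta)$-embedding $\tau$ one has $\tau(u_i)^{n_i} = \tau(g_i \eta^{j_i}) = u_i^{n_i}$, so every conjugate of $u_i$ over $E(\eta)$ is of the form $\omega u_i$ with $\omega^{n_i} = 1$; if $u_i \notin E(\eta)$ the sum of these distinct roots of unity must vanish (otherwise $u_i$ would lie in $E(\eta)$), so $\Tr_{F/E(\eta)}(u_i) = 0$. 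Dividing the traced identity by $d$ yields a genuine instance of \eqref{inteq} for $f$ with every remaining datum in $E(\eta)$; the special terms with $u_i \notin E(\eta)$ have simply dropped out, which is legitimate because the lemma only asks for \emph{some} expression of this form, and the defining relations $u_i' = v_i' u_i$, $w_i^{k_i} = v_i^{m_i}$, $v_i' = w_i'/w_i$ of the surviving terms are untouched.

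It remains to descend from $E(\eta)$ to $E$. Write $\eta'/\eta = \rho'$ with $\rho \in E$. A surviving special $u_i$ lies in $E(\eta)$ and satisfies $u_i^{n_i} = g_i \eta^{j_i}$ with $g_i \in E$; since a rational function of $\eta$ whose $n_i$-th power is a monomial in $\eta$ is itself a monomial, we get $u_i = a_i \eta^{l_i}$ with $a_i \in E$ and $l_i \in {\mathbb Z}$, so the corresponding term equals an element of $E$ times $\eta^{l_i}$. Now decompose (the new) $v_0$ by Lemma \ref{decomp1} into a Laurent polynomial part $\sum_k b_k \eta^k$ plus a proper part, and apply Lemma \ref{decomp2} to write $\sum_{i \in I_1} c_i v_i'/v_i$ as $a\rho' + \sum_{i \in I_1} c_i \bar v_i'/\bar v_i$ plus a proper rational function with denominator prime to $\eta$, where $a$ is a constant and $\bar v_i \in E$. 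Comparing the two unique decompositions of \eqref{inteq}: the proper parts cancel; for each $k \ne 0$ the $\eta^k$-coefficient shows the special terms with $l_i = k$ sum to $-(b_k \eta^k)'/\eta^k$, so the special terms with $l_i \ne 0$ are exactly cancelled by the piece $(\sum_{k \ne 0} b_k \eta^k)'$ of $v_0'$ and may be discarded along with it; and the $\eta^0$-coefficient leaves $f = (b_0 + a\rho)' + \sum_{i \in I_1} c_i \bar v_i'/\bar v_i + (\text{surviving special terms with } l_i = 0)$, which is an instance of \eqref{inteq} with every datum in $E$ (for those terms $u_i = a_i \in E$). This would prove the lemma.

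I expect the trace computation to be the crux. The naive hope that each special term transforms into a single special term is false, and what rescues the argument is precisely that the ill-behaved terms --- those whose $u_i$ is not already in $E(\eta)$ --- are exactly the ones annihilated by the trace, so nothing needed for the conclusion is lost. The structural input (Lemmas \ref{ros-struct} and \ref{irr-gamma-exp}) and the partial-fraction bookkeeping over the non-constant field $E$ are then routine, the only mild care being that differentiation does not raise degree in $\eta$.
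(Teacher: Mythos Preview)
Your proof is correct and follows the same overall architecture as the paper's: pass to the relative algebraic closure of $E$ in $F$; use Lemma~\ref{ros-struct} and Lemma~\ref{irr-gamma-exp} to place $v_i,w_i$ in $E$ and to see that each $u_i$ is an $n_i$-th root of an element of $E$ times a power of $\eta$; trace from $F$ to $E(\eta)$; then use the Laurent/proper decomposition (Lemmas~\ref{decomp1} and~\ref{decomp2}) and compare coefficients of powers of $\eta$.

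The one genuine difference is in the trace step. The paper first adjoins an $n$-th root $\bar\eta$ of $\eta$ (with $n=\mathrm{lcm}\,n_i$) so that every $u_i$ already lies in $E(\bar\eta)$, and only then takes the trace; thus no special term is disturbed and the trace acts only on $v_0$ and the $I_1$-sum. You instead trace directly over $F/E(\eta)$ and observe that each special $u_i$ has all its conjugates of the form $\omega u_i$ with $\omega$ a root of unity, so that $\Tr(u_i)=cu_i$ with $c$ a constant lying in $F$ (hence in $E$, since $F$ and $E$ share constants); if $c\ne 0$ then $u_i\in E(\eta)$, so the terms with $u_i\notin E(\eta)$ are simply annihilated. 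Your route avoids enlarging $F$ and is slightly more direct; the paper's route makes the subsequent bookkeeping marginally simpler because no special term disappears. Either way one lands in $E(\eta)$ with each surviving $u_i$ a monomial in $\eta$, and the coefficient comparison is then identical (your absorption of the constant $a\rho'$ from Lemma~\ref{decomp2} into $v_0$ matches what the paper does implicitly). One small remark: your phrase ``distinct roots of unity'' is not quite right, since embeddings of $F$ can repeat conjugates of $u_i$; but your parenthetical ``otherwise $u_i$ would lie in $E(\eta)$'' is exactly the correct argument, so the conclusion stands.
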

\begin{proof}
Replacing $E$ by its algebraic closure in $F$ we may assume
that $E$ is algebraically closed in $F$.
For $i \in I_2 \cup I_3$ we have $\frac{u_i'}{u_i} = v_i'$, so
by Lemma \ref{ros-struct} we have $v_i \in E$ and there are
integers $n_i, j_i$ such that $u_i^{n_i}/\eta^{j_i} \in E$.
For $i \in I_4$ we have $u_i^{n_i}/\eta^{j_i} \in E$ by
Lemma \ref{irr-gamma-exp}.
Without loss of generality we may assume that $n_i = 1$.  Namely,
let $n$ be the least common multiple of $n_i$.  We extend $F$ by
adding element $\bar \eta$ such that ${\bar \eta}^n = \eta$.
Then, ${\bar \eta}'/{\bar \eta} = \eta'/(n\eta) = v'/n$,
so if we replace $F$ by $F({\bar \eta})$ and $\eta$ by ${\bar \eta}$
we still have fields of required form.  After that
$u_i/{\bar \eta}^{j_in/n_i}$ is algebraic over $E$, so in $E$.
We shall keep old notation but from now on we will assume
that $n_i = 1$, so in particular $u_i \in E(\eta)$.

Note that for $i\in I_3$ corresponding $w_i$ are algebraic
over $E$ extended by $v_i$, so also $w_i$ are algebraic over
$E$, hence in $E$.  For $i \in I_4$ by Lemma \ref{irr-gamma-exp}
$w_i \in E$. Together with previous remarks it means
that terms of sum over $i \in I_2 \cup I_3 \cup I_4$ are all in $E(\eta)$.
Consequently, by taking trace of both sides of equation \ref{inteq}
we may assume that
$v_0$ and $v_i$ for $i\in I_1$ are in $E(\eta)$, so all terms
are in $E(\eta)$.  Now we use expansion into partial fractions.
For $i \in I_2\cup I_3 \cup I_4$ corresponding terms
are in $E[\eta,\eta^{-1}]$.
Terms of sum over $I_1$ are logarithmic derivatives, so have
only simple poles.  All finite poles of $v_0'$ are multiple, so except
possibly for pole at $\eta = 0$ can not cancel with other
terms.  Consequently $v_0$ has no finite poles except
possibly at $\eta = 0$, so $v_0 \in E[\eta,\eta^{-1}]$.
By Lemma \ref{decomp2} sum over $I_1$ is equal to sum with terms
in $E$ plus proper rational function with denominator
relatively prime to $\theta$.  By Lemma \ref{decomp1}
decomposition of $f$ into proper rational function with denominator 
relatively prime to $\theta$ and element of $E[\eta,\eta^{-1}]$ is unique,
so proper rational part must be $0$.  Now, we have equality
with all terms in $E[\eta,\eta^{-1}]$,
so coefficients of powers of $\eta$ must be equal.  Taking
coefficient of power $0$ we see that equation \ref{inteq}
is satisfied with all $v_i, u_i, w_i \in E$.
\end{proof}

\begin{lemma}\label{irr-gamma-log}
Let $E$ be a differential field of characteristic $0$.
Assume $F$ is algebraic over $E(\eta)$, $\eta$ is logarithmic
or nonelementary primitive
over $E$, $E$ is algebraically closed in $F$, $F$ and $E$
have the same constants and $E$ is log-explicit.
If $w'u$ is an
irrational gamma term in $F$, then $w'u$ is an
irrational gamma term in $E$.
\end{lemma}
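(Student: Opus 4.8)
The plan is to determine the elements $w$, $u$ and $v$ one after another using the specialised structure result Lemma~\ref{ros-struct}, and then to eliminate the only surviving ambiguity --- an additive constant multiple of $\eta$ inside $v$ --- by exploiting either the nonelementarity of $\eta$ or the log-explicitness of $E$, according to the type of $\eta$.

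First I would dispose of the trivial case $\eta\in E$: then $E(\eta)=E$, so $F$ is algebraic over $E$, and since $E$ is algebraically closed in $F$ we get $F=E$ and there is nothing to prove. So assume $\eta\notin E$; note $\eta'\in E$ in both cases (for logarithmic $\eta$ we have $\eta'=z'/z$ for some $z\in E$, and for a nonelementary primitive $\eta'\in E$ by definition), whence $\eta$ is non-constant. Let $(a,w,v,u)$ witness that $w'u$ is an irrational gamma term in $F$: thus $v'=w'/w$, $u'/u=(a-1)v'-w'=((a-1)v-w)'$, and $a$ is an irrational constant, so $a\in E$ since $F$ and $E$ have the same constants. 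Since $w'/w=v'$, Lemma~\ref{ros-struct} (in the case $\eta'\in E$) gives $w\in E$ and a constant $c$ with $g:=v-c\eta\in E$; since $u'/u=((a-1)v-w)'$, Lemma~\ref{ros-struct} also gives $u\in E$. Hence $w,u\in E$ and $v=c\eta+g$ with $g\in E$, and it remains only to show $c=0$: then $v\in E$ and the same tuple $(a,w,v,u)$, now lying in $E$, witnesses that $w'u$ is an irrational gamma term in $E$.

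Suppose $c\neq 0$. If $\eta$ is a nonelementary primitive over $E$, then $\eta=c^{-1}(v-g)$ lies in $E(v)$, which is an elementary extension of $E$ because $v'=w'/w$ with $w\in E$; so $\eta$ is elementary over $E$, contrary to hypothesis. If instead $\eta$ is logarithmic, write $\eta'=z'/z$ with $z\in E$, so $z$ is non-constant. Substituting $v=c\eta+g$ into $u'/u=(a-1)v'-w'$ gives
$$
\frac{u'}{u}-(a-1)c\,\frac{z'}{z}=\bigl((a-1)g-w\bigr)'
$$
with $u$, $z$ and $(a-1)g-w$ all in $E$. If $1$ and $(a-1)c$ were linearly independent over ${\mathbb Q}$, log-explicitness of $E$ would force $z$ to be an exponential element of $E$, say $\zeta'=z'/z$ with $\zeta\in E$; then $(\zeta-\eta)'=0$, so $\eta$ differs from $\zeta$ by a constant of $F$, hence $\eta\in E$ as $F$ and $E$ have the same constants --- a contradiction. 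So $(a-1)c\in{\mathbb Q}$; since $a-1$ is irrational and nonzero and $(a-1)c\neq 0$, the number $c$ is itself irrational. But substituting $v=c\eta+g$ into $v'=w'/w$ gives
$$
\frac{w'}{w}-c\,\frac{z'}{z}=g'
$$
with $w,z\in E$, and now $1$ and $c$ are linearly independent over ${\mathbb Q}$, so the same log-explicitness argument again forces $\eta\in E$, a contradiction. Hence $c=0$ in all cases, and the lemma follows.

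The step I expect to be the main obstacle is ruling out $c\neq 0$ in the logarithmic case: one must invoke log-explicitness twice --- once against the defining relation of $u$, once against that of $v$ --- and it is precisely the irrationality of $a$ that guarantees at least one of the coefficient pairs $\{1,(a-1)c\}$, $\{1,c\}$ is ${\mathbb Q}$-linearly independent, so that log-explicitness applies.
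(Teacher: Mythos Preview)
Your proof is correct and follows the same overall strategy as the paper: use Lemma~\ref{ros-struct} to place $w$ and $u$ in $E$ and to write $v$ as an element of $E$ plus a constant multiple of $\eta$, then exploit the irrationality of $a$ together with log-explicitness (or nonelementarity of $\eta$) to kill that multiple. The only organisational difference is that the paper invokes Lemma~\ref{ros-struct2} to obtain \emph{rational} constants $r,q$ with $v-r\eta\in E$ and $(a-1)v-w-q\eta\in E$, and then concludes from $(a-1)r=q$ with $a$ irrational that $r=q=0$; you instead bypass Lemma~\ref{ros-struct2}, take the constant $c$ from Lemma~\ref{ros-struct} without rationality information, and recover what is needed by applying the log-explicit hypothesis directly (twice, once to each defining relation). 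Your separate treatment of the nonelementary-primitive case via an elementarity contradiction is in fact cleaner than routing through Lemma~\ref{ros-struct2}, whose $k(s,t)$ hypothesis is tailored to the logarithmic/exponential situation.
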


\begin{proof}
$u$ and $w$ are exponential elements in $F$. Since
$\eta$ is a primitive over $E$, by Lemma \ref{ros-struct}
they are in $E$.  $v$ and $(a-1)v - w$ are logarithmic
elements in $F$.  Since $E$ is log-explicit by 
Lemma \ref{ros-struct2} we have
$v = r\eta + \tilde v$ and $(a-1)v - w = q\eta + g$
with rational $r$, $q$ and $\tilde v$, $g$ in $E$.
Since $\eta$ is
transcendental over $F$, this means that $(a-1)r = q$.
But $a$ is irrational, so the last equality is possible
only if $q = r = 0$.  Hence $v\in E$ and $(a-1)v - w \in E$.
So indeed all parts of the term $w'u$ are in $E$.
\end{proof}

\begin{lemma}\label{gamma-str1-prim}
Let $E$ be a log-explicit differential field of characteristic $0$,
$F$ be a differential
field algebraic over $E(\eta)$, $\eta' = u'/u$ with $u\in E$.
Suppose that $F$
has the same constants as $E$ and equation \ref{inteq} holds with $f\in E$,
$v_i, u_i, w_i \in F$, then we can find new
$v_i, u_i, w_i$ algebraic over $E$ and new constant $c_i$
such that \ref{inteq} holds.
\end{lemma}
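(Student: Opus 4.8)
\emph{Proof plan.} The argument will run in parallel with the proof of Lemma~\ref{gamma-str1-exp}, the exponential-specific ingredients being replaced by their primitive counterparts. First I would replace $E$ by its algebraic closure in $F$, so that $E$ is algebraically closed in $F$ (the constants are unchanged). If $\eta$ is algebraic over $E$ then, since $\eta'=u'/u\in E$, its trace over $E$ differs from a multiple of $\eta$ by a constant, hence $\eta\in E$, $F$ is algebraic over $E$, and the conclusion follows by applying the trace to \ref{inteq} exactly as in Lemma~\ref{gamma-str1-exp}; so assume $\eta$ is transcendental over $E$. For $i\in I_2\cup I_3$ we have $u_i'/u_i=v_i'$, so Lemma~\ref{ros-struct} (the case $\eta'\in E$) gives $u_i\in E$ and shows that $v_i$ is, modulo $E$, a constant multiple of $\eta$; since $E$ is log-explicit and $F$ is algebraic over $E(\eta)$ with $\eta'=u'/u$, $u\in E$, Lemma~\ref{ros-struct2} (with $s=\eta$, $t=u$) shows that multiple is rational, $v_i=r_i\eta+\tilde v_i$ with $r_i\in\mathbb{Q}$, $\tilde v_i\in E$. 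For $i\in I_3$ the relation $w_i^{k_i}=v_i^{m_i}$ then exhibits $w_i$, hence the whole term $c_iv_i'w_iu_i$, in the field obtained by adjoining a $k_i$-th root of $v_i$ to $E$. For $i\in I_4$, Lemma~\ref{irr-gamma-log} --- which is where log-explicitness enters for the irrational terms --- shows $w_i'u_i$ is already an irrational gamma term over $E$, so $u_i,w_i,v_i\in E$.

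Next I would descend the remaining data by a partial-fraction argument, using Lemma~\ref{decomp3} in place of Lemma~\ref{decomp2}. The only terms not yet visibly in $E(\eta)$ are those with $r_i\ne0$ (for $i\in I_3$ such a $w_i$ is moreover ramified over the place $\eta=-\tilde v_i/r_i$), so the plan is to eliminate them first. By Lemma~\ref{decomp3} the $I_1$-sum rewrites as a logarithmic-derivative sum with terms in $E$ plus a proper rational function of $\eta$. An $I_2$-term with $r_i\ne0$ is a proper rational function whose poles lie over $\eta=-\tilde v_i/r_i\in E$, and the $I_3$- and $I_4$-terms with $r_i\ne0$ have poles only over these places and over $\eta=\infty$; comparing the principal parts of both sides of \ref{inteq} at these places (using that $f\in E$ has no poles) one argues that all the $r_i\ne0$ contributions must cancel, against the $E$-free rational part of the $I_1$-sum, against the relevant part of $v_0'$, and among themselves, and reorganising what survives absorbs every essential occurrence of $\eta$ into new $v_i\in E$ and new constants. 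Now all of \ref{inteq} lies in $E(\eta)$ with the $I_2,I_3,I_4$ parts in $E$, so taking the trace over $E(\eta)$ lets me assume $v_0$ and the $I_1$-terms lie in $E(\eta)$ too; then $v_0'$ differs from an element of $E$ by the $I_1$-sum, which has only simple poles, so $v_0$ has no finite poles and $v_0\in E[\eta]$. Lemma~\ref{decomp3} together with the uniqueness in Lemma~\ref{decomp1} kills the proper rational part of the $I_1$-sum, and matching coefficients of powers of $\eta$ forces $v_0=a\eta+b$ with $a$ a constant and $b\in E$; since $a\eta'=a\,u'/u$, replacing $v_0$ by $b$ and adjoining one more $I_1$-logarithm $a\,u'/u$ yields \ref{inteq} with all $v_i,u_i,w_i\in E$.

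The step I expect to be the main obstacle is this elimination of the $r_i\ne0$ terms. In the exponential case of Lemma~\ref{gamma-str1-exp} the $\Ei$- and $\Gamma$-terms were automatically Laurent polynomials in $\eta$, so comparing coefficients was immediate; here the arguments $v_i$ of the $\Ei$- and $\Gamma$-terms may genuinely depend on $\eta$ through a term $r_i\eta$, the corresponding terms are proper --- and, for $i\in I_3$, ramified --- rational functions of $\eta$, and $F$ may itself be ramified over $E(\eta)$, so logarithmic derivatives need no longer have simple poles and the principal-part bookkeeping at the places over $\eta=-\tilde v_i/r_i$ and $\eta=\infty$ is delicate. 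Carrying this bookkeeping through, proving that these contributions must cancel, and re-expressing what survives over $E$ is where the real work lies; the remainder transcribes the proof of Lemma~\ref{gamma-str1-exp}.
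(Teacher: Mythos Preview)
Your overall outline matches the paper's proof, and the endgame you describe (Lemma~\ref{decomp3} for the $I_1$-sum, $v_0$ forced linear in $\eta$, absorbing $a\eta'=a\,u'/u$ into the $I_1$-sum) is exactly what the paper does. The genuine gap is precisely the step you flag as ``the main obstacle'': you leave the elimination of the $I_3$-terms with $r_i\neq 0$ as unfinished ``delicate bookkeeping'' at ramified places. In the paper no such bookkeeping is carried out, because it is not needed.

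The missing idea is to take the trace from $F$ to $E(\eta)$ \emph{before} any partial-fraction analysis, not after. By Lemma~\ref{ros-struct} alone one already has $u_i\in E$ and $v_i=d_i\eta+\bar v_i\in E[\eta]$ for $i\in I_2\cup I_3$ (Lemma~\ref{ros-struct2} and the rationality of $d_i$ are not used). Thus for $i\in I_2\cup I_3$ the only factors not visibly in $E(\eta)$ are the $w_i$ for $i\in I_3$; each such $w_i$ satisfies $w_i^{k_i}=v_i^{m_i}$ with $v_i\in E(\eta)$, so its trace from $F$ to $E(\eta)$ is either $0$ or $[F:E(\eta)]\,w_i$. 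After taking the trace of \ref{inteq} and dividing by $[F:E(\eta)]$, every surviving $I_3$-term has $w_i\in E(\eta)$. But then $w_i^{k_i}=v_i^{m_i}$ with $v_i$ of degree at most $1$ in $\eta$, $k_i\geq 2$, and $\gcd(k_i,m_i)=1$ is only possible when $d_i=0$, i.e.\ $v_i\in E$ and hence $w_i\in E$. This disposes of the $I_3$-terms with no pole analysis at all. The $I_2$-terms with $d_i\neq 0$ are honest proper rational functions of $\eta$ (numerator $v_i'\in E$, denominator linear in $\eta$); they are simply lumped into the proper-rational part that is then shown to vanish, and need not be ``re-expressed over $E$''. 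With this one change your plan becomes the paper's proof.
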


\begin{proof}
Again, we may assume that $E$  is algebraically closed in $F$.
Since $E$ is log-explicit by Lemma \ref{irr-gamma-log}
terms of sum over $I_4$ are in $E$, so we may disregard them below.
By Lemma \ref{ros-struct} for $i \in I_2 \cup I_3$ we have
$u_i \in E$, $v_i = d_i\eta + {\bar v}_i$ with constant $d_i$ and
${\bar v}_i \in E$.
In particular $v_i \in E(\eta)$ for $i \in I_2 \cup I_3$.
$w_i$ are roots of $v_i$, so taking trace from $F$ to $E(\eta)$
we get $\Tr(w_i) = 0$ if $w_i \notin E(\eta)$ and
$\Tr(w_i) = Mw_i$ where $M=[F : E(\eta)]$ if $w_i \in E(\eta)$.
So taking trace of both sides of equation \ref{inteq} and dividing
by $M$ we get equality with all terms in $E(\eta)$.  Now, $w_i$
are in $E(\eta)$ and $v_i$ are in $E[\eta]$ of degree at most $1$.  Since
$k_i \geq 2$ and $m_i$ is relatively prime to $k_i$ equality
$w_i^{k_i} = v_i^{m_i}$ is only possible when $w_i$ and $v_i$
are in $E$.  Consequently for $i \in I_3$ we have $u_i, v_i, w_i$
in E.  If $i \in I_2$ if $v_i \notin E$, then the corresponding term
is a proper rational function.  Similarly, for $i \in I_1$ by
Lemma \ref{decomp3} we have
$$
\frac{v_i'}{v_i} = \frac{{\bar v}_i'}{{\bar v}_i} + 
\text{proper rational function}
$$
so we can rewrite sum over $I_1$ as sum with terms in $E$ plus
proper rational function.  Writing $v_0$ as sum of polynomial
and proper rational function we see that right hand side of
equation \ref{inteq} can be rewritten as sum with $v_0$ being
a polynomial and other terms in $E$ plus a proper rational
function.  Since decomposition of rational function into
polynomial and proper rational function is unique, this means
that proper rational function above in fact equals $0$.
So now we have $v_0 \in E[\eta]$ and all other terms in $E$.
Standard argument from proof of Liouville theorem shows
that $v_0$ must be a polynomial of degree at most $1$.
Term of degree one in $v_0$ has form $a\eta$ with $a$ being
a constant, and
$$
a\eta' = a\frac{u'}{u}
$$
so derivative of this term can be added to sum over $I_1$.
Consequently, we have all terms in $E$.
\end{proof}

\begin{proof}(Of theorem \ref{gamma-str0}).  First, note that
if \ref{inteq} holds in a gamma extension $F$, than
it holds in a gamma extension having constants algebraic
over constants of $E$.  Namely, $F = E(\theta_1, \dots, \theta_n)$
where each $\theta_i$ satisfies differential equation over
$E(\theta_1, \dots, \theta_{i-1})$ (note that algebraic
equation is treated as differential equation of order $0$).
In other words,
$F = K(\theta_1, \dots, \theta_n)$ is a gamma extension of
$E$ if and only if $\theta_1, \dots, \theta_n$ satisfy
appropriate system of differential equations.  $f = \gamma'$
is also a differential equation.  Like in Lemma 2.1 \cite{SSC}
clearing denominators
we convert system of differential equations to a differential
ideal $I$ plus an inequality $g \ne 0$ (which is responsible
for non-vanishing of denominators) and use result of Kolchin
which says that differential ideal $I$ which has zero in some
extension satisfying $g \ne 0$ has zero in extension having
constants algebraic over constants of $E$ and satisfying
$g \ne 0$.  Solution clearly gives us gamma extension
with constants algebraic over constants of $E$ such that
$f = \gamma'$ has solution.  Next we inductively prove
that for each $i$ equation \ref{inteq} holds
with terms in a field algebraic over
$E_i = E(\theta_1, \dots, \theta_i)$.  For $i = n$ this is
our assumption.  Before inductive step let us remark
that each $E_i$ is log-explicit: this follows by an
easy induction using Lemma \ref{l-closed1} and
Lemma \ref{l-closed2}.
To pass from $i$ to $i - 1$ note
that if $\theta_i$ is algebraic than the claim is
trivial.  When $\theta_i$ is an exponential or a logarithm
we may use Lemma \ref{gamma-str1-exp} (\ref{gamma-str1-prim}
respectively).  If $\theta_i$ is a Ei term or gamma term,
we argue like in case of logarithm, but in the last step
we get extra term in sum over $I_2$ (or $I_3$ or $I_4$).  So
by induction principle equation \ref{inteq} holds
with all terms algebraic over $E$.

Note that assumption that $E$ is log-explicit was used only
in the proof of Lemma \ref{gamma-str1-prim} to handle irrational
gamma terms: if such terms are absent, then we can drop
assumption that $E$ is log-explicit.  Also, if irrational
gamma terms are not present in original integral, then
they will not appear during the proof.  This proves the
clam when $f$ has integral in rational gamma extension.
\end{proof}

\begin{theorem}\label{gamma-str1}
Let $K, E$ be differential fields of characteristic $0$ such that
$K$ is l-closed and log-explicit, has a constant field $C$ and
is algebraically closed in $E$, $E$ is
algebraic over $K(\theta)$ and $\theta$ is an
exponential, a logarithm or a nonelementary primitive monomial.
If $\theta$ is an exponential additionally assume that
for any algebraic extension $L$ of $K$
exponential elements in $L$ together with $\theta$
generate group of exponential elements in $LE$.
If $f$ has
integral in a gamma extension of $E$, then
there is algebraic extension $\bar K$ of $K$
and an algebraic extension $\bar C$ of constants $C$ such that
in Theorem \ref{gamma-str0} we can require that
$a_i, c_i\in \bar C$, 
$v_0, v_i\in {\bar C}E$ for $i \in I_1$,
$v_i \in K(\theta) + {\bar C}$ for
$i \in I_2\cup I_3$, $v_i, w_i \in K + {\bar C}$ for $i \in I_4$,
$u_i\in K(\theta)$ for $i \in I_2 \cup I_4$, $u_i \in {\bar K}(\theta)$,
$w_i\in {\bar K}E$,
$w_iu_i \in {\bar C}E$ for
$i \in I_3$.
If $\theta$ is an exponential or a nonelementary primitive,
then we can take $v_i \in K + {\bar C}$, $i \in I_2\cup I_3$,
$w_i \in {\bar K}$, $w_iu_i \in {\bar C}K(\theta)$ for
$i \in I_3$.
If $\theta$ is a
nonelementary primitive, then we can take $u_i\in K$
for $i \in I_2 \cup I_4$, $u_i \in {\bar K}$, $u_iw_i \in {\bar C}K$
for $i \in I_3$.  If $\theta$ is a logarithm we can take
$u_i\in K$ for $i \in I_4$.

If $f$ has integral in a rational gamma extension of $E$,
then sum over $I_4$ does not appear in the result and we
can drop assumption that $E$ is log-explicit.
\end{theorem}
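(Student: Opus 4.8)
The plan is to re-run the inductive descent of Theorem~\ref{gamma-str0} one level at a time, but now keeping a ledger of which subfield each piece of equation~\ref{inteq} occupies. First apply Theorem~\ref{gamma-str0} to get equation~\ref{inteq} with all of $v_0,v_i,u_i,w_i$ and the constants $a_i,c_i$ in an algebraic extension $\bar E$ of $E$; since only finitely many elements occur we may take $\bar E$ finite and Galois over $E$. The $a_i,c_i$ are constants, hence lie in the algebraic closure $\bar C$ of $C$ in $\bar E$; let $\bar K$ be the algebraic closure of $K$ in $\bar E$, so $\bar C\subseteq\bar K$, $\bar E$ is algebraic over $\bar K(\theta)$, $\bar K$ has constant field $\bar C$, and by Lemmas~\ref{l-closed1} and~\ref{l-closed2} $\bar K$ is again l-closed and log-explicit. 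When $\theta$ is an exponential the extra hypothesis, applied with $L=\bar K$, says that every exponential element of $\bar E$ is a constant multiple of (exponential element of $\bar K$)$\cdot\theta^{j}$, $j\in{\mathbb Z}$; this is exactly what makes the ``reduction to $n_i=1$'' of Lemma~\ref{gamma-str1-exp} unnecessary, so no root of $\theta$ needs to be adjoined and the conclusions can be stated with $K(\theta)$ itself.

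Next localise the special-function terms. For $i\in I_2\cup I_3$ we have $u_i'/u_i=v_i'$, so Lemma~\ref{ros-struct} locates $v_i$ in $\bar K+\bar C$ (with an extra $\bar C$-multiple of $\theta$ allowed when $\theta$ is a logarithm) and expresses $u_i$ as a constant times an exponential element of $\bar K$ times a power of $\theta$. To descend from $\bar K$ to $K$ I would use two facts: a logarithmic element of $\bar K$ whose derivative lies in $K$ differs from an element of $K$ by a constant (Lemma~\ref{log-closed}, since $K$ is l-closed), and an exponential element of $\bar K$ whose logarithmic derivative lies in $K$ is a constant multiple of an element of $K$ (Lemma~\ref{exp-const}, applied to $K$ extended by algebraic constants); the second is exactly the step that yields $u_i\in K(\theta)$ for $i\in I_2$ and $u_i\in K$ when $\theta$ is primitive. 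For $i\in I_3$ the element $w_i$ is algebraic over $\bar K(v_i)$, hence lies in $\bar K$ or, in the logarithmic case, $\bar K(\theta)$; the constraints $w_i^{k_i}=v_i^{m_i}$, $\gcd(k_i,m_i)=1$, $k_i\ge 2$ then force $w_i$ and $v_i$ into the smaller fields by the degree argument from the proof of Lemma~\ref{gamma-str1-prim}, and the product $w_iu_i$ is tracked into $\bar C E$ (respectively $\bar C K(\theta)$ when $\theta$ is exponential or primitive, $\bar C K$ when $\theta$ is primitive) by combining those memberships. For $i\in I_4$ I would apply Lemma~\ref{irr-gamma-exp} (exponential $\theta$) or Lemma~\ref{irr-gamma-log} (logarithmic or primitive $\theta$, the only point where log-explicitness is needed) to locate $v_i,w_i$ and $u_i$, and then descend $v_i,w_i$ to $K+\bar C$ and $u_i$ to $K(\theta)$ (to $K$ when $\theta$ is logarithmic or primitive) exactly as above.

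It remains to handle $v_0$ and the sum over $I_1$. After the previous step the located terms over $I_2\cup I_3\cup I_4$ lie in $\bar K[\theta,\theta^{-1}]$ (exponential $\theta$) or $\bar K[\theta]$ (primitive $\theta$), or are sums of such polynomials with proper rational functions in $\theta$ (logarithmic $\theta$). Reusing the pole- and degree-analysis of Lemmas~\ref{gamma-str1-exp} and~\ref{gamma-str1-prim}: Lemma~\ref{decomp2} (resp.\ Lemma~\ref{decomp3}) rewrites the $I_1$ sum as a sum of the same shape plus a proper rational function while absorbing a polynomial part into an $I_2$-type term, the uniqueness in Lemma~\ref{decomp1} forces the remaining proper rational parts to cancel, and the Liouville-theorem bound forces $v_0$ into $\bar K[\theta,\theta^{-1}]$ (resp.\ $\bar K[\theta]$; in the primitive case the leading monomial $a\theta$ has $a\theta'=a\,u'/u$ and is kept inside $v_0$). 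Now all terms lie in $\bar K(\theta)\subseteq\bar E$, the terms over $I_2\cup I_3\cup I_4$ and the constants already lie in $\bar C E$, and $f=v_0'+\sum_{i\in I_1}c_i v_i'/v_i+(\text{those terms})$ holds in $\bar E$; taking the trace from $\bar E$ down to $\bar C E$ (Galois, since $\bar E/E$ is) replaces each $v_i$, $i\in I_1$, by $\Norm_{\bar E/\bar C E}(v_i)$, replaces $v_0$ by $\Tr_{\bar E/\bar C E}(v_0)$, multiplies the remaining terms by $[\bar E:\bar C E]$, and — after dividing the affected constants by that integer — puts $v_0$ and the $v_i$, $i\in I_1$, into $\bar C E$ without disturbing the rest. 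Finally, if the integral lies in a rational gamma extension then, exactly as in Theorem~\ref{gamma-str0}, no irrational gamma term is produced, so $I_4$ is empty; log-explicitness entered only through Lemma~\ref{irr-gamma-log} and the $I_4$ analysis, so it can then be dropped.

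The step I expect to be the main obstacle is the bookkeeping for $i\in I_3$, and in the exponential case its interplay with the other term types: one must simultaneously respect $u_i'=v_i'u_i$, $w_i^{k_i}=v_i^{m_i}$ with $-k_i<m_i<0$ and $\gcd(k_i,m_i)=1$, and the factorisation of an exponential element of $\bar E$ as a constant times an exponential element of $\bar K$ times a power of $\theta$, while extracting the asymmetric membership statements ($v_i\in K(\theta)+\bar C$ but $u_i\in\bar K(\theta)$, $w_i\in\bar K E$, yet $w_iu_i\in\bar C E$, with the sharper versions for primitive and logarithmic $\theta$). Making the use of Lemma~\ref{exp-const} — which descends only along extensions by algebraic constants, hence effectively works relative to $\bar C K$ rather than all of $\bar K$ — come out consistently with these mixed claims is the delicate point; by contrast the $v_0$/$I_1$ part and the $I_4$ part follow the pattern already established for Theorem~\ref{gamma-str0} plus a routine norm--trace descent.
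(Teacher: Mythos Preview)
Your proposal has a genuine gap at exactly the point you flag as ``delicate'': the claim that an exponential element of $\bar K$ whose logarithmic derivative lies in $K$ is a constant multiple of an element of $K$ is false. Take $K=\mathbb{C}(x,e^{x})$, $\theta=e^{x^{2}}$, $E=K(\theta)$, and $\bar E=E(e^{x/2})$; then $e^{x/2}$ is algebraic over $K$ (its square is $e^{x}\in K$), so $e^{x/2}\in\bar K$, it is exponential with $(e^{x/2})'/e^{x/2}=1/2\in K$, yet it is not a constant times an element of $K$. Lemma~\ref{exp-const} gives descent only along extensions by algebraic constants, so it takes you from $\bar C K$ to $K$, not from $\bar K$ to $K$; you say this yourself, but you never supply the step that puts the $u_i$ into $\bar C K$ (or $\bar C E$) in the first place. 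Without that, your later claim that the $I_2\cup I_3\cup I_4$ terms ``already lie in $\bar C E$'' before your final trace is unjustified, and the trace from $\bar E$ to $\bar C E$ will scramble those terms rather than merely scale them.

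The paper's proof supplies precisely the missing mechanism, and it is not a descent of each $u_i$ individually. Using that $K(\theta)$ is l-closed, one first shows every $u_i$ (and $w_i$) is a \emph{radical} over $K(\theta)$, so that $E_2=\bar C E(\{u_i\},\{w_i\})$ is an \emph{abelian} extension of $\bar C E$ on which the Galois group acts on each $u_i$, $w_i$ by roots of unity. One then takes the trace first from $\bar E$ down to $E_2$ (which already contains the $I_2\cup I_3\cup I_4$ terms), and then from $E_2$ down to $\bar C E$: in this second trace each $I_2\cup I_4$ term with $u_i\notin\bar C E$ and each $I_3$ term with $w_iu_i\notin\bar C E$ has trace zero and is simply dropped. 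Only for the surviving terms does one have $u_i\in\bar C E$, and \emph{now} Lemma~\ref{exp-const} legitimately yields $u_i\in E$ up to a constant; the further descent to $K(\theta)$ or $K$ then goes as you describe. Your treatment of the $v_i$ via Lemma~\ref{log-closed}, of $I_4$ via Lemmas~\ref{irr-gamma-exp}/\ref{irr-gamma-log}, and of the rational--gamma clause is fine; the missing idea is this two-step abelian trace that discards, rather than descends, the offending $u_i$.
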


\begin{proof}
The proof uses method from proof of Theorem 4.1 in \cite{SSC}.
By Theorem \ref{gamma-str0} equation \ref{inteq} holds with
$c_i$, $v_i, u_i, w_i$ algebraic over $E$.
Let $\bar C$
be field of constants of algebraic extension of $E$ generated by
$c_i, v_i, u_i, w_i$.  By assumption $K(\theta)$ has the
same constants as $K$, so $\bar C$ is algebraic over $C$.

Note that by
Lemmas \ref{l-closed1} and \ref{l-closed2} $K(\theta)$ is
l-closed.
For $i \in I_2 \cup I_3 \cup I_4$
corresponding $v_i$ are logarithmic elements in algebraic
extension of $K(\theta)$ so by Lemma \ref{log-closed}
we have $v_i \in K(\theta) + {\bar C}$.  Similarly
if $i \in I_4$ then $(a-1)v_i - w_i \in K(\theta) + {\bar C}$ so
also $w_i\in K(\theta) + {\bar C}$.
For $i \in I_4$ by Lemma \ref{irr-gamma-exp} and \ref{irr-gamma-log}
we have $v_i, w_i$ algebraic over $K$.  Consequently
since $v_i, w_i \in K(\theta) + {\bar C}$ we have $v_i, w_i \in K + {\bar C}$
for $i \in I_4$.
  Also for $i \in I_2 \cup I_3 \cup I_4$
for each $u_i$ some power is in $K(\theta)$ modulo multiplicative
constants.  If needed changing constants $c_i$ in equation \ref{inteq}
we may assume that some power of $u_i$ is in $K(\theta)$,
so $u_i$ are radicals over $K(\theta)$.  By definition $w_i$
for $i \in I_3$
are radicals over $K(\theta) + {\bar C}$.  Note that all terms
of sums over $I_2\cup I_2 \cup I_4$ in equation \ref{inteq} are
in ${\bar C}K(\theta, \{u_i\}, \{w_i\})$, so taking trace
we may assume that all terms are in $E_2 = {\bar C}E(\theta, \{u_i\}, \{w_i\})$.
Next, $E_2$ is an abelian
extension of ${\bar C}E$, so Galois group will act on $u_i$ and
$w_i$ multiplying them by roots of unity.  Consequently
after taking trace from $E_2$ to ${\bar C}E$ terms of the sum
over $i \in I_2 \cup I_4$ will vanish for $i$ such that $u_i \notin {\bar C}E$.
Similarly, terms of the sum over $i \in I_3$ will vanish for $i$
such that $w_iu_i \notin {\bar C}E$.  So we have $v_0 \in {\bar C}E$ and
$v_i\in {\bar C}E$ for $i \in I_1$,
$u_i \in {\bar C}E$ for $i \in I_2$,
$w_iu_i \in {\bar C}E$ for $i \in I_3$.
If $\theta$ is not an exponential, then by Lemma \ref{ros-struct},
$u_i$ are algebraic
over $K$, so taking ${\bar K} = {\bar C}K(\{u_i\})$ where $i \in I_3$ by
definition we have $u_i \in {\bar K}E$ and since $w_i = (w_iu_i)/u_i$
and $w_iu_i \in {\bar C}E$ we have $w_i \in {\bar K}E$.
If $\theta$ is an exponential, than $v_i$ for $i \in I_3$ are algebraic
over $K$, so also $w_i$ are algebraic over $K$ and
taking ${\bar K} = {\bar C}K(\{w_i\})$ where $i \in I_3$ by
definition we have $w_i \in {\bar K}E$ and since $u_i = (w_iu_i)/w_i$
and $w_iu_i \in {\bar C}E$ we have $u_i \in {\bar K}E$.

Since $K(\theta)$ is
l-closed $u_i'/u_i \in K(\theta)$ for $i\in I_2 \cup I_4$.  By Lemma
\ref{exp-const} this means that $u_i$ are in $E$ modulo
multiplicative constants.  So if needed changing $c_i$-s in
equation \ref{inteq} we may take $u_i \in E$ for $i\in I_2 \cup I_4$.
If $\theta$ is not an exponential
this means that $u_i$ are algebraic over $K$, so since $K$
is algebraically closed in $E$, we have $u_i \in K$.
If $\theta$ is an exponential, then by assumption any
exponential element in $E$ is in $K(\theta)$.  So, in
both cases we have $u_i \in K(\theta)$ for $i \in I_2 \cup I_4$.

If $\theta$ is an exponential, then Lemma \ref{ros-struct}
implies that $v_i$ for $i \in I_2\cup I_3$ are algebraic
over $K$, so $v_i \in K + {\bar C}$.  Similarly, if
$\theta$ is a nonelementary primitive, than proof of Lemma
\ref{l-closed1} implies that $v_i$ for $i \in I_2\cup I_3$ are algebraic
over $K$, so again $v_i \in K + {\bar C}$.  Consequently
in both cases $w_i$ for $i \in I_3$ are algebraic over $K$ and we added them to
$\bar K$.

If $\theta$ is an exponential, then by assumption $u_i$ for $i \in I_3$
being exponential elements in ${\bar K}E$ are in ${\bar K}(\theta)$.
If $\theta$ is not an exponential, then $u_i$ for $i \in I_3$
are algebraic over $K$ and we added them to
${\bar K} \subset {\bar K}(\theta)$.  So in both cases $u_i$ for $i \in I_3$
are in ${\bar K}(\theta)$.

If $\theta$ is an exponential or nonelementary primitive, then
$w_i$ are in $\bar K$ and $u_i \in {\bar K}(\theta)$ for $i \in I_3$.
But we also showed that $w_iu_i \in {\bar C}E$.  Consequently
$w_iu_i \in {\bar C}E \cap {\bar K}(\theta) = {\bar C}K(\theta)$
(the last equality follows because ${\bar C}K$ is algebraically
closed in ${\bar C}E$).

If $\theta$ is a nonelementary primitive, then we already
showed that $u_i$ and $w_i$ for $i\in I_3$ are in ${\bar K}$,
so also $w_iu_i \in {\bar K}$ and
$w_iu_i \in {\bar K} \cap {\bar C}E = {\bar C}K$.

Note that when $f$ has integral in a rational gamma extension of $E$,
Theorem \ref{gamma-str0} says that sum over $I_4$ does not appear
in the result.  Our proof does not introduce new terms, so this
remains valid.  Similarly, assumption that $E$ is log-explicit is
only needed to use Theorem \ref{gamma-str0}, so if
$f$ has integral in a rational gamma extension of $E$, then
we can drop it.
\end{proof}

\begin{theorem}\label{gamma-str2}
Let $K, E$ be differential fields of characteristic $0$ such that
$K$ is l-closed and log-explicit and
is algebraically closed in $E$, $E$ is
algebraic over $K(\theta)$ and $\theta$ is an
exponential.  Assume that for any algebraic extension $L$ of $K$
exponential elements in $L$ together with $\theta$
generate group of exponential elements in $LE$.
If $f$ has integral in a gamma extension of $E$, then
there exists $f_1\in E$ and $f_2 \in K[\theta, \theta^{-1}]$
such that $f = f_1 + f_2$, $f_1$ has integral elementary over $E$,
$f_2$ has integral in a gamma extension of $K(\theta)$.  Moreover,
there is effective procedure to find $f_1$ and $f_2$.

If $f$ has integral in a rational gamma extension of $E$,
then $f_2$ has integral in a rational gamma extension of $K(\theta)$
and we can drop assumption that $E$ is log-explicit.
\end{theorem}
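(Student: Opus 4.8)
\emph{Proof sketch.} The plan is to reduce the statement to Theorem~\ref{gamma-str1} and then to split off the part of $f$ that is a Laurent polynomial in $\theta$. First I would apply Theorem~\ref{gamma-str1} in the case where $\theta$ is an exponential, obtaining \ref{inteq} in its refined form: for suitable algebraic extensions $\bar K$ of $K$ and $\bar C$ of $C$ we may take $v_0,v_i\in\bar CE$ for $i\in I_1$, $v_i\in K+\bar C$ for $i\in I_2\cup I_3$, $v_i,w_i\in K+\bar C$ for $i\in I_4$, $u_i\in K(\theta)$ for $i\in I_2\cup I_4$, and $u_i\in\bar K(\theta)$, $w_i\in\bar K$, $w_iu_i\in\bar CK(\theta)$ for $i\in I_3$. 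In every case $u_i'/u_i$ lies in $\bar K$ and in $K(\theta)$, hence in $\bar K\cap K(\theta)=K$ (the equality because $\theta$ is transcendental over $K$ while $\bar K$ is algebraic over $K$), so the argument of Lemma~\ref{decomp2} applied to $u_i$ shows that, after absorbing a multiplicative constant into $c_i$, one has $u_i=\theta^{j_i}g_i$ with $g_i\in K$ (in $\bar K$ when $i\in I_3$) and $j_i\in\mathbb Z$. Substituting, and using that $1/v_i=1/(k+c)\in\bar CK$ when $v_i=k+c$ with $k\in K,\ c\in\bar C$, and that for $i\in I_3$ the relation $w_iu_i\in\bar CK(\theta)$ forces $w_ig_i\in\bar CK$, every term of the three sums over $I_2,I_3,I_4$ becomes an element of $\bar CK$ times a power of $\theta$. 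Hence their total $P$ lies in $\bar CK[\theta,\theta^{-1}]$; and, after prepending to $K(\theta)$ the algebraic steps generating $\bar C$ and the elements $w_i$ with $i\in I_3$, each such term is a gamma term over $K(\theta)$ of the same clause as before (the relations $u_i'=v_i'u_i$, $w_i^{k_i}=v_i^{m_i}$ and the clause~6 relation survive the algebraic base change), so $P$ has an integral in a gamma extension of $K(\theta)$.

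Next I would remove the auxiliary constants. Enlarging $\bar C$ to its Galois closure over $C$ we may assume $\bar C/C$ is Galois, so $\bar CE/E$ is Galois with group $G$ acting trivially on $E$, hence on $K(\theta)$. Since $K$ is algebraically closed in $E$ we have $\bar C\cap E\subseteq K$, so $\Tr_{\bar CE/E}$ carries $\bar CK$ into $K$ and therefore $P\in\bar CK[\theta,\theta^{-1}]$ into $K[\theta,\theta^{-1}]$. By \ref{inteq} the element $f-P=v_0'+\sum_{i\in I_1}c_iv_i'/v_i$ of $\bar CE$ has an elementary antiderivative; applying $\frac{1}{|G|}\Tr_{\bar CE/E}$ to $f=(f-P)+P$ and using $\Tr_{\bar CE/E}(f)=|G|f$ gives
$$
f=\frac{1}{|G|}\Tr_{\bar CE/E}(f-P)+\frac{1}{|G|}\Tr_{\bar CE/E}(P)=:f_1+f_2 .
$$
Here $f_1\in E$ is a derivative of an element of $E$ plus a rational linear combination of logarithmic derivatives with constant coefficients, so it has an elementary integral over $E$; and $f_2\in K[\theta,\theta^{-1}]$ is $\frac{1}{|G|}$ times a sum of $G$-conjugates of $P$, each of which has an integral in a gamma extension of $K(\theta)$ (as $G$ fixes $K(\theta)$ and maps gamma terms to gamma terms), so $f_2$ does too. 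In the rational gamma case Theorem~\ref{gamma-str1} yields no terms over $I_4$ and needs no log-explicitness of $E$, and nothing above reintroduces either, so that case follows as well.

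It remains to make the splitting effective, which is needed only because Theorem~\ref{gamma-str1} is not constructive. The decomposition $f=f_1+f_2$ is determined only up to adding to $f_2$ a Laurent polynomial over $K$ having an elementary integral over $K(\theta)$, so it suffices to compute any decomposition with these properties. Put $M=[E:K(\theta)]$, compute $g:=\frac{1}{M}\Tr_{E/K(\theta)}(f)\in K(\theta)$, use Lemma~\ref{decomp1} to write $g=\ell+p$ with $\ell\in K[\theta,\theta^{-1}]$ and $p$ proper with denominator prime to $\theta$, and return $(f-\ell,\ell)$. Since $g=\frac{1}{M}\Tr_{E/K(\theta)}(f_1)+f_2$ and the decomposition of Lemma~\ref{decomp1} is additive, $\ell-f_2$ is the Laurent-polynomial part of $\frac{1}{M}\Tr_{E/K(\theta)}(f_1)$. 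As $f_1$ has an elementary integral over $E$, so does $\frac{1}{M}\Tr_{E/K(\theta)}(f_1)\in K(\theta)$ over $K(\theta)$ (take $\Tr_{E/K(\theta)}$ of a Liouville-Ostrowski representation of $f_1$ and apply the Liouville-Ostrowski principle again over $K(\theta)$); applying Lemma~\ref{decomp2} to the logarithmic part of such a representation and matching Laurent-polynomial parts via Lemma~\ref{decomp1} shows $\ell-f_2$ has an elementary integral over $K(\theta)$. Therefore $\ell=f_2+(\ell-f_2)$ has an integral in a gamma extension of $K(\theta)$ and $f-\ell=f_1-(\ell-f_2)\in E$ has an elementary integral over $E$.

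The step I expect to be the main obstacle is twofold: making the first paragraph precise — checking that every reduced term of the sums over $I_2,I_3,I_4$ genuinely meets the defining conditions of its clause of the definition of gamma extension over $K(\theta)$ after the algebraic base change (in particular $w_i^{k_i}=v_i^{m_i}$ for $i\in I_3$) — and, in the effectivity argument, confirming that the concretely computed $\ell$ differs from the structural $f_2$ exactly by an elementarily integrable Laurent polynomial, which is what legitimizes returning $(f-\ell,\ell)$.
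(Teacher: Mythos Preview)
Your proof is correct and follows essentially the same route as the paper: apply Theorem~\ref{gamma-str1}, observe that the sums over $I_2\cup I_3\cup I_4$ form a Laurent polynomial in $\theta$ over an algebraic extension of $K$, take the trace down to $E$ to get the splitting $f=f_1+f_2$, and for effectivity compute $\frac{1}{M}\Tr_{E/K(\theta)}(f)$ and peel off its Laurent part via Lemma~\ref{decomp1}. The only cosmetic differences are that you trace from $\bar CE$ rather than $\bar KE$ (which works because you first check $P\in\bar CK[\theta,\theta^{-1}]$), and you deduce $u_i=g_i\theta^{j_i}$ by noting $u_i'/u_i\in K$ and arguing with poles, whereas the paper invokes the exponential-generation hypothesis directly at this point---both are valid since the hypothesis has already been absorbed into Theorem~\ref{gamma-str1}.
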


\begin{proof}
By Theorem \ref{gamma-str1} there exists algebraic extension
$\bar K$ of $K$ such that equation
\ref{inteq} holds with $v_i \in \bar K$, $u_i \in {\bar K}(\theta)$
for $I_2\cup I_3 \cup I_4$, $w_i \in {\bar K}$.  Since every
exponential element $u$ in ${\bar K}E$ is of form ${\bar u}\theta^{j}$
with integer $j$ and ${\bar u} \in {\bar K}$ we see that
$u_j$ in fact are in ${\bar K}[\theta, \theta^{-1}]$.  Consequently
sum over $I_2\cup I_3 \cup I_4$ is in ${\bar K}[\theta, \theta^{-1}]$.
Denote by $g_2$ sum over $I_2\cup I_3 \cup I_4$ and by $g_1$ sum
of $v_0'$ and sum over $I_1$.  Then $f = g_1 + g_2$, $g_1$ has
elementary integral, $g_1 \in {\bar K}E$, $g_2$ has integral in
a gamma extension of $E$ and $g_2 \in {\bar K}[\theta, \theta^{-1}]$.
Without loss of generality we may assume that ${\bar K}E$ is
a finite algebraic extension of $E$.
Let $m$ be degree of ${\bar K}E$ over $E$ and put
$f_1 = \frac{1}{m}\Tr(g_1)$, $f_2 = \frac{1}{m}\Tr(g_2)$ where $\Tr$
denotes trace from ${\bar K}E$ to $E$.  Of course $f = f_1 + f_2$ and
$f_1$ has integral elementary over $E$.  Note that $f_2$ has integral in
a gamma extension of $E$.  Namely, trace is sum of terms like in
sum over $I_2\cup I_3 \cup I_4$, but we also get conjugates of terms
from original sum.  In particular each term is integrable in a gamma
extension of $E$ so the whole sum is integrable.  Also, gamma
extension of $E$ is a gamma extension of $K(\theta)$, so $f_2$
is integrable in a gamma extension of $K(\theta)$.  We claim that
$f_2 \in K[\theta, \theta^{-1}]$.  Namely, since $\theta \in E$
trace acts separately on each coefficient of $g_2$.  $K$ is
algebraically closed in $E$ so coefficients of $f_2$ have
values in $K$, that is $f_2 \in K[\theta, \theta^{-1}]$.

To compute some decomposition of this sort we may assume
that $E$ is a finite algebraic extension of $K(\theta)$.
Let $n$ be be degree of $E$ over $K(\theta)$.  Put
$h_2 = \frac{1}{n}\Tr(f)$, $h_1 = f - h_1$.  Of course
$f = h_1 + h_2$.  Note that $h_2 = \frac{1}{n}\Tr(f_1) + f_2$,
so $h_1 = f_1 - \frac{1}{n}\Tr(f_1)$.  In particular $h_1$
has elementary integral, $h_2 - f_2$ has elementary integral
and $h_2 \in K(\theta)$.  So we reduced problem to functions
from $K(\theta)$.  Next, write $h_2 = w + p$ where
$w \in K[\theta, \theta^{-1}]$ and $p$ is a proper rational
function with denominator relatively prime to $\theta$.
Note that $w = f_2 + w_1$, where $w_2$ is defined by
equality $h_2 - f_2 = w_2 + p$.  Now, $h_2 - f_2$ has
elementary integral, say
$$
h_2 - f_2 = v_0' + \sum c_i{v_i'}{v_i}
$$
By Lemma \ref{decomp2}
$$
\sum c_i{v_i'}{v_i} = a\eta' + \sum c_i{{\bar v_i}'}{\bar v_i} +
\text{proper rational function}
$$
where $a$ is a constant and ${\bar v_i} \in K$/
We also write $v_0 = r + y$ where
$r \in K[\theta, \theta^{-1}]$ and $y$ is proper rational function with
denominator relatively prime to $\theta$.  Now,
$r' \in K[\theta, \theta^{-1}]$ and derivative of $y$ is
again proper rational function.  So together, proper
rational part $p$ of $h_2 - f_2$ has integral elementary over $K(\theta)$,
namely
$$
p = y' + \sum c_i{v_i'}{v_i} - \sum c_i{{\bar v_i}'}{\bar v_i} - a\eta'.
$$
Consequently $h_1 + p$ has integral elementary over $E$ and
$w$ has integral in a gamma extension of $E$.  So
we can take $f_1 = h_1 + p$ and $h_2 = w$.
Since trace
is effectively computable $h_1$ and $h_2$ are effectively
computable.  Also decomposition $h_2 = w + p$ is effectively
computable, so our choice of $f_1$ and $f_2$ is effectively
computable.
The claim about rational gamma extension is clear.
\end{proof}

\begin{lemma}\label{exp-gen}
Let $K$ be a differential field of characteristic $0$.
Let $\theta$ be an exponential
over $K$, that is $\theta = \eta'\theta$ for some $\eta \in K$,
$\theta$ is transcendental over $K$ and $K(\theta)$ and $K$ have
the same constants.  Let $M$ be an algebraic extension of $K$.
Then $\theta$ generates group of exponential elements of
$MK(\theta)$ modulo $M$.
\end{lemma}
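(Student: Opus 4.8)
The plan is to prove that every exponential element of $MK(\theta)=M(\theta)$ can be written as $m\theta^{j}$ with $m\in M^{\times}$ and $j\in\mathbb{Z}$, which is precisely the assertion that $\theta$ generates the group of exponential elements of $MK(\theta)$ modulo $M$.

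First I would record two preliminary facts. Since $M$ is algebraic over $K$ and $\theta$ is transcendental over $K$, $\theta$ is transcendental over $M$; hence $M(\theta)$ is a rational function field over $M$ and $M$ is algebraically closed in it. Also $\eta'\neq 0$, since otherwise $\theta'=0$ and $\theta$ would be a constant of $K(\theta)$ not lying in $K$. Next I would check that $M(\theta)$ has the same constants as $M$: a constant $c$ of $M(\theta)$ is algebraic over $K(\theta)$, so writing its minimal polynomial $c^{n}+a_{n-1}c^{n-1}+\dots+a_{0}=0$ over $K(\theta)$ and differentiating (using $c'=0$) gives a relation of lower degree, forcing $a_{i}'=0$ for all $i$; thus $c$ is algebraic over the constants of $K(\theta)$, which equal the constants of $K$ by hypothesis, so $c$ is algebraic over $M$ and hence lies in $M$ by the first remark.

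The key local fact is that for $v\in M(\theta)$ and any monic irreducible $p\in M[\theta]$ with $p\neq\theta$, the element $v'$ has no simple pole at $p$. This rests on the observation that $p\nmid p'$: indeed $p'$ has the same degree as $p$, with leading coefficient $(\deg p)\eta'\neq 0$ because $p$ is monic, so $p\mid p'$ would force $p'=(\deg p)\eta'\,p$, whence a direct computation gives $(p\,\theta^{-\deg p})'=0$; by the constants computation this puts $p\,\theta^{-\deg p}$ in $M$, making $\theta$ a factor of $p$ and contradicting irreducibility. Granting $p\nmid p'$, a short $p$-adic valuation computation shows that if $v$ is $p$-integral then so is $v'$, and if $v$ has a pole of order $r\geq 1$ at $p$ then, since the term involving $p'/p^{r+1}$ strictly dominates, $v'$ has a pole of order exactly $r+1\geq 2$ at $p$; in either case $v'$ has no simple pole at $p$. (Poles of $v'$ at $\theta$ or at infinity are irrelevant here.)

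Then I would conclude as follows. Let $u$ be an exponential element of $M(\theta)$, say $u'/u=v'$ with $v\in M(\theta)$, and write $u=c\,\theta^{e_{0}}\prod_{i}p_{i}^{e_{i}}$ with $c\in M^{\times}$, the $p_{i}$ distinct monic irreducibles in $M[\theta]$ different from $\theta$, and $e_{i}\in\mathbb{Z}$. Then $u'/u=c'/c+e_{0}\eta'+\sum_{i}e_{i}\,p_{i}'/p_{i}$, where $c'/c,\,e_{0}\eta'\in M$ and each $p_{j}'/p_{j}$ with $j\neq i$ is $p_{i}$-integral, while $p_{i}'/p_{i}$ has a genuine simple pole at $p_{i}$ because $p_{i}\nmid p_{i}'$. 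Hence if some $e_{i}\neq 0$ then $u'/u=v'$ has a simple pole at $p_{i}$, contradicting the key local fact. Therefore all $e_{i}=0$, so $u=c\,\theta^{e_{0}}$ with $c\in M^{\times}$, as required. The only delicate point is the local pole-order computation (together with the reduction $p\nmid p'$), which is routine once the constants of $M(\theta)$ have been pinned down; one could instead invoke Lemma~\ref{ros-struct} with $E=M$, $F=M(\theta)$ to obtain only the weaker conclusion $u^{n}/\theta^{j}\in M$, which would then have to be sharpened, so the direct partial-fraction route seems preferable.
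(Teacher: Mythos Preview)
Your proof is correct and complete, but it takes a genuinely different route from the paper's. The paper first invokes Lemma~\ref{ros-struct} (with $E=M$, $F=M(\theta)$) to obtain integers $n,j$ with $u^{n}=s\theta^{j}$ for some $s\in M$, and then sharpens this via the unique factorisation of $M[\theta]$: writing $u=a/b$ with $a,b\in M[\theta]$ coprime, the equality $a^{n}=s\theta^{j}b^{n}$ forces $b\in M^{\times}$, so $u\in M[\theta]$ and its only irreducible factor is $\theta$, giving $u=t\theta^{m}$ with $t\in M^{\times}$. You instead bypass Lemma~\ref{ros-struct} entirely and argue directly from the pole structure of logarithmic derivatives: $v'$ can never have a simple pole at an irreducible $p\neq\theta$, whereas $u'/u$ would have one if $u$ had a nontrivial $p$-factor. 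Your approach is more self-contained and makes the mechanism transparent without appealing to Rosenlicht's structure theorem; the paper's approach is shorter once Lemma~\ref{ros-struct} is on the table, and the UFD sharpening you flagged as a concern is in fact a two-line argument. Both routes need the same preliminary facts (no new constants in $M(\theta)$, $M$ algebraically closed in $M(\theta)$), which you establish correctly.
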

\begin{proof}
Note that $M$ is algebraically closed in $M(\theta)$
and that $M$ and $M(\theta)$ have the same constants.
Let $u$ be an exponential element in $M$.
By Lemma \ref{ros-struct}
there exist integers $n$ and $j$ such that $u^n/\theta^j \in M$.
That is $u^n = s\theta^j$ with $s \in M$.
When $l>0$, this means $u^n \in M[\theta]$.  Since $M[\theta]$ is
a unique factorisation domain we have $u \in M[\theta]$ and
$u = t\theta^m$ for some $t\in M$ and integer $m$.  Similar
argument works for negative $l$, giving the claim.
\end{proof}

\begin{theorem}\label{gamma-str3}
Let $K$ be a differential field of characteristic $0$ which
is l-closed and log-explicit.  Let $\theta$ be an exponential
over $K$, that is $\theta = \eta'\theta$ for some $\eta \in K$,
$\theta$ is transcendental over $K$ and $K(\theta)$ and $K$ have
the same constants.  If $f = \sum a_j\theta^j$ where $a_j \in K$
and $j\in {\mathbb Z}$
have an integral in a gamma extension of $K(\theta)$, then
for each $j$ term $a_j\theta^j$ have integral in a gamma
extension of $K(\theta)$.

If $f$ has integral in a rational gamma extension of $K(\theta)$
then $a_j\theta^j$ have integrals a rational gamma extension of $K(\theta)$
and we can drop assumption that $E$ is log-explicit.
\end{theorem}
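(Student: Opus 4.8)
The plan is to invoke Theorem~\ref{gamma-str1} with $E=K(\theta)$ and the same field $K$, and then compare coefficients of powers of $\theta$ in the resulting Liouville--gamma decomposition. Here $K$ is algebraically closed in $K(\theta)$ since $\theta$ is transcendental over $K$, and the extra hypothesis of Theorem~\ref{gamma-str1} about exponential elements is exactly what Lemma~\ref{exp-gen} supplies (take $M=L$): every exponential element of $L(\theta)$ is an exponential element of $L$ times a power of $\theta$. Thus Theorem~\ref{gamma-str1} produces an algebraic extension $\bar C$ of the constants $C$ (necessarily finite over $C$, being generated by finitely many elements algebraic over $C$) and a finite algebraic extension $\bar K$ of $K$ for which equation~\ref{inteq} holds with $v_0,v_i\in\bar CK(\theta)$ for $i\in I_1$; $v_i\in K+\bar C$ and $u_i\in K(\theta)$ for $i\in I_2\cup I_4$; $w_i\in K+\bar C$ for $i\in I_4$; and $v_i\in K+\bar C$, $w_i\in\bar K$, $u_i\in\bar K(\theta)$, $w_iu_i\in\bar CK(\theta)$ for $i\in I_3$. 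Since $\bar C\bar K$ is finite over $K$, the field $\bar C\bar K(\theta)$ is a gamma extension of $K(\theta)$, so it suffices to show each $a_j\theta^j$ is integrable in a gamma extension of $\bar C\bar K(\theta)$, and everything below takes place in this field.

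The next step is to turn the sums over $I_2$, $I_3$, $I_4$ into Laurent polynomials in $\theta$. For $i\in I_2\cup I_4$ the element $u_i$ is an exponential element of $K(\theta)$ with logarithmic derivative $v_i'$, respectively $\bigl((a_i-1)v_i'-w_i'\bigr)$, lying in $K$; so by Lemma~\ref{exp-gen} applied over $\bar CK$ (equivalently by Lemma~\ref{ros-struct} together with unique factorisation in $\bar CK[\theta]$) we have $u_i=t_i\theta^{e_i}$ with $t_i\in\bar CK$ and $e_i\in\mathbb Z$; likewise $w_iu_i=s_i\theta^{e_i}$ with $s_i\in\bar CK$ for $i\in I_3$. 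Since moreover $v_i'\in K$ for $i\in I_2\cup I_3$ and $w_i'\in K$ for $i\in I_4$, each term $c_iv_i'u_i/v_i$, $c_iv_i'w_iu_i$, $c_iw_i'u_i$ of those sums lies in $\bar C\bar K\,\theta^{e_i}$, i.e.\ is a single Laurent monomial in $\theta$. I then apply Lemma~\ref{decomp2} to $\sum_{I_1}c_iv_i'/v_i$, rewriting it as $a\eta'+\sum_ic_i\bar v_i'/\bar v_i+p_1$ with $a\in\bar C$, $\bar v_i\in\bar CK$, and $p_1$ a proper rational function with denominator prime to $\theta$, and Lemma~\ref{decomp1} to $v_0$, writing $v_0=P+q$ with $P\in\bar CK[\theta,\theta^{-1}]$ and $q$ proper rational with denominator prime to $\theta$. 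Using the standard fact that the derivative of such a $q$ is again proper rational with denominator prime to $\theta$ (valid because $\theta$ is exponential, so differentiation does not lower $\theta$-degree and $\theta$ does not divide the new denominator), equation~\ref{inteq} becomes an identity in $\bar C\bar K(\theta)$ in which $f=\sum_ja_j\theta^j$ and every term except $q'+p_1$ lies in $\bar C\bar K[\theta,\theta^{-1}]$; by uniqueness in Lemma~\ref{decomp1} the proper rational part $q'+p_1$ must vanish.

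Finally I compare coefficients of $\theta^j$. Writing $P=\sum_kp_k\theta^k$ with $p_k\in\bar CK$, one has $(p_k\theta^k)'=(p_k'+k\eta'p_k)\theta^k$; the contributions of $a\eta'$ and $\sum_ic_i\bar v_i'/\bar v_i$ sit in the coefficient of $\theta^0$ only, and a term of the $I_2$, $I_3$ or $I_4$ sum contributes only to the coefficient of $\theta^{e_i}$. Hence for every $j$,
$$
a_j\theta^j = (p_j\theta^j)' + [j=0]\Bigl(a\eta' + \sum_i c_i\tfrac{\bar v_i'}{\bar v_i}\Bigr) + \sum_{i\in I_2,\,e_i=j} c_i\tfrac{v_i'u_i}{v_i} + \sum_{i\in I_3,\,e_i=j} c_iv_i'w_iu_i + \sum_{i\in I_4,\,e_i=j} c_iw_i'u_i .
$$
Now $c_iv_i'u_i/v_i=c_i(u_i'/v_i)$ is $c_i\theta_i'$ for a clause~4 (Ei\nobreakdash-type) generator $\theta_i$ over $\bar C\bar K(\theta)$ (we have $u_i'=v_i'u_i$ and $u_i,v_i\in\bar C\bar K(\theta)$); $c_iv_i'w_iu_i=c_iw_iu_i'$ is $c_i\theta_i'$ for a clause~5 generator (here the carried-along integers $k_i,m_i$ with $w_i^{k_i}=v_i^{m_i}$, $-k_i<m_i<0$, are exactly as required); and $c_iw_i'u_i$ is $c_i\theta_i'$ for a clause~6 generator (with $a_i\in\bar C$ irrational, $v_i'=w_i'/w_i$, $u_i'=((a_i-1)v_i'-w_i')u_i$); while $(p_j\theta^j)'$ is a derivative already in $\bar C\bar K(\theta)$, $a\eta'$ a derivative in $\bar CK$, and $\sum_ic_i\bar v_i'/\bar v_i$ a sum of logarithmic derivatives (clause~3). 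Adjoining the finitely many generators occurring on the right-hand side yields one gamma extension $G_j$ of $\bar C\bar K(\theta)$, hence of $K(\theta)$, in which $a_j\theta^j$ has an integral. For the rational gamma case, Theorem~\ref{gamma-str1} deletes the $I_4$ sum and removes the log-explicit hypothesis, and only clauses~3,4,5 are used, so $G_j$ is a rational gamma extension.

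The step I expect to be the main obstacle is the bookkeeping around the algebraic extensions: ensuring $\bar C$ and $\bar K$ may be taken finite so that $\bar C\bar K(\theta)$ really is a gamma extension of $K(\theta)$ and integrability descends, that $\theta$ stays transcendental over $\bar C\bar K$ so Lemma~\ref{decomp1} still applies there, and that the normal form $u_i=t_i\theta^{e_i}$, $w_iu_i=s_i\theta^{e_i}$ holds over $\bar CK$ rather than merely over $K$. Once these are in place, the coefficient comparison is the routine argument familiar from the proof of Liouville's theorem.
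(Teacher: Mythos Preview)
Your proof is correct and follows the same approach as the paper's: invoke Lemma~\ref{exp-gen} to verify the exponential-generation hypothesis of Theorem~\ref{gamma-str1}, use the resulting decomposition together with the fact that every exponential element of $\bar K(\theta)$ has the form $\bar u\,\theta^{j}$ to express all non-elementary terms as Laurent monomials, then compare coefficients of powers of $\theta$. The paper's proof is terser (it absorbs your Lemma~\ref{decomp1}/\ref{decomp2} step into the single remark that ``terms of sum over $I_1$ can be replaced by a sum with terms in $\bar K$'' and ``$v_0\in\bar K[\theta,\theta^{-1}]$''), but the structure is identical; your version simply spells out the bookkeeping that the paper leaves implicit.
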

\begin{proof}
By Lemma \ref{exp-gen} assumptions of exponential case of
Theorem \ref{gamma-str1} are satisfied.  So there exists
algebraic extension $\bar K$ of $K$ such that equation
\ref{inteq} holds with $v_i \in \bar K$, $u_i \in {\bar K}(\theta)$
for $I_2\cup I_3 \cup I_4$, $w_i \in {\bar K}$.  Since
$f \in K[\theta, \theta^{-1}]$ terms of sum over $I_1$ can
be replaced by a sum with terms in $\bar K$.  Similarly
we can take $v_0 \in \bar K[\theta, \theta^{-1}]$.  Since every
exponential element $u$ in ${\bar K}(\theta)$ is of form ${\bar u}\theta^{j}$
with integer $j$ and ${\bar u} \in {\bar K}$ we see
that we can group terms in equation \ref{inteq} according to
powers of $\theta$ which gives the result.
\end{proof}

It is tempting to weaken assumption about $\theta$ in exponential
case of Theorem \ref{gamma-str1} and in Theorem \ref{gamma-str2}.
However, the following example shows that assumption that
$\theta$ generates group of exponential elements in $E$ modulo
exponential elements in $K$ is too weak.

\begin{example}
This is variant of Example 1.2 from \cite{SSC}.
Let $K={\mathbb Q}(x, \exp(x))$.  Put $f = \exp(-x/2)/\sqrt{x/2}$ and
$E = K(f)$.  $f^2 = 2\exp(-x)/x \in K$, so $E$ is an
algebraic extension of $K$.  Put $g = \sqrt{\pi}\erf(\frac{x}{2}))$.
We have
$$
(2g)' = \frac{\exp(\frac{-x}{2})}{\sqrt{\frac{x}{2}}} = f
$$
so $f$ has integral in a gamma extension of $E$.  It is easy to
check that group of exponential elements in $E$ modulo multiplicative
constants is generated by $\exp(x)$.  However,
trace of $f$ from $E$ to $K(\theta)$ is $0$, so conclusion
of Theorem \ref{gamma-str2} does not hold.
\end{example}

\section{Conclusion}

Our Theorem\ref{gamma-str0} looks like a modest extension of Theorem 1.1
in \cite{SSC} and Theorem 3.1 in \cite{LeLa}, however it covers
case of particular practical
interest.  Theorem \ref{gamma-str1} extends Theorem 4.1
in \cite{SSC}.  We allows larger class of integrals
and it is worth noting that the class of
l-closed fields allowed in Theorem \ref{gamma-str1} is much larger
than class of Liouvillian fields allowed in \cite{SSC}.
We also get more information about needed algebraic extensions,
but the price for this is more complicated statement of the
theorem.  In case when top transcendental is an exponential
(which is most important case) Theorem \ref{gamma-str2}
gives simple statement.  Also Theorems \ref{gamma-str2}
and \ref{gamma-str3}
are a first step towards an algorithm: they effectively reduce
problem of integrating elements of algebraic extension
of $K(\theta)$ where $\theta$ is an exponential over $K$
to integrands of form $a\theta$ where $a$ is in $K$.

\end{document}